\numberwithin{equation}{section} 
\newtheorem{thm}{Theorem}[section]
\newtheorem{cor}[thm]{Corollary}
\newtheorem{prop}[thm]{Proposition}
\newtheorem{lem}[thm]{Lemma}
\newtheorem{rmq}[thm]{Remark}
\newtheorem{deff}[thm]{Definition}
\newcommand{\ti}{\tilde}
\DeclareMathOperator{\sgn}{sgn}
\DeclareMathOperator{\diverg}{div}
\DeclareMathOperator*{\argmin}{\arg\!\min}
\def\E{\mathcal{E}}
\def\R{\mathbb{R}}
\def\N{\mathbb{N}}
\def\D{\mathbb{D}}
\def\e{\varepsilon}
\def\O{\Omega}
\def\BV{BV(\Omega)}
\def\la{\lambda}
\def\vphi{\varphi}
\def\G{\nabla}
\title{Some remarks on the staircasing phenomenon in total-variation based
image denoising      
}
\author{Khalid Jalalzai\thanks{CMAP, CNRS, Ecole Polytechnique, 91128 Palaiseau Cedex, France ({\tt khalid.jalalzai@polytechnique.edu}).}
}
\begin{document}
\date{April 10, 2012}
\maketitle

\begin{abstract}
This paper deals with the so-called staircasing phenomenon, which frequently arises in total variation based denoising models in image analysis. We prove in particular that staircasing always occurs at global extrema of the datum and at all extrema of the minimizer. It is also shown that, for radial images, the staircasing always appears at the extrema and at the boundary of the image. We also prove the equivalence between the denoising model and the total variation flow, in the radial case, thus extending a previous result in dimension one. This equivalence cannot hold in the nonradial case, as it is shown with a counterexample. This connection allows us to understand how the staircase zones and the discontinuities of the denoising problem evolve with the regularization parameter.
\end{abstract}

\textbf{Keywords:} total variation, image denoising, staircasing.\\

\textbf{AMS:} 35J70, 65J20, 35K65, 68U10.\\

\textbf{Acknowledgment:} I warmly thank Antonin Chambolle who suggested me this fruitful research project.

\thispagestyle{plain}
\markboth{Khalid Jalalzai}{Some remarks on the staircasing phenomenon in TV denoising}

\section{Introduction}
Functions of bounded variation equipped with the total variation semi-norm were introduced for image reconstruction in 1992. Since then, they have had many successful applications for inverse problems in imaging. Indeed, the penalization of the total variation has the ability to smooth out the image by creating large regular zones and to keep the edges of the most important objects in the image. In this paper we aim to study the latter property in the continuous setting.\\

We assume that a corrupted image $g:\O\subset\R^2\to\R$ went through a degradation
\begin{align*}
g=g_0+n
\end{align*}
where $g_0$ is the original clean image, $n$ is a Gaussian white noise of standard deviation $\sigma$. Rudin, Osher and Fatemi (ROF) proposed in \cite{Rudin92} to minimize the total variation
\begin{align*}
u\mapsto TV(u)=\int_\O|Du|
\end{align*}
amongst functions of bounded variation under the constraint $\|u-g\|_2^2\leq\sigma^2|\O|^2$ to solve the inverse problem and thus get a restored image $u$.
It was proven in \cite{ChamLions} that one can solve in an equivalent way the unconstrained problem
\begin{align*}
\min_{u\in\BV}\la\int_\O|Du|+\frac{1}{2}\|u-g\|_2^2
\end{align*}
for an adequate Lagrange multiplier $\la$. In the literature the minimization of ROF's energy is referred to as the \textit{denoising problem}.

Another possibility is to consider the \textit{total variation flow} for restoring $g$.  A denoised image is given by $u(t)$ that solves
\begin{align*}
\begin{cases}
-\partial_t u(t)\in\partial TV(u(t)) \text{ a.e. }t\in[0,T],\\
u(0)=g.
\end{cases}
\end{align*}
which has a unique solution according to \cite{Andreu}. As we shall see it is not true in general that these approaches coincide.

It has been long observed that using the total variation has the advantage of recovering the discontinuities quite well. In \cite{ChamJump}, the authors carried out a study of the behavior of the minimizer of the denoising problem at these discontinuities. Their results are extended in \cite{JalalzaiJump,JalalzaiPhD} where it is proven that the discontinuities of the minimizer of the anisotropic total variation are contained in those of the datum $g$. Moreover, it is established that one can observe new discontinuities in the weighted case if the weight is merely Lipschitz continuous.

In the present paper, we shall focus on another very important property of the total variation: it smoothes the highly oscillating regions by creating large constant zones which is known in the literature as the \textit{staircasing effect}. This phenomenon is sometimes not desirable in imaging applications since it yields blocky and non-natural structures. It was already studied in \cite{Ring} for the one-dimensional case. Actually, the author proves that whenever the data $g\not\in BV(a,b)$, the minimizer $u'_\lambda$ vanishes almost everywhere.
In \cite{Nikolova00}, Nikolova proves that the staircasing effect is related to the non-differentiability of the total variation term. More precisely, large homogeneous zones are recovered from noisy data and remain unchanged for small perturbations. In other words, the creation of such zones is quite probable. On the contrary, absence of staircasing with the differentiable approximation $TV_\e=\sqrt{|Du|^2+\e}$ is also proven. The latter approaches were carried out only for finite dimensional approximations of the total variation. In \cite{Louchet}, Louchet and Moisan proposed an alternative to the minimization of the total variation by considering the TV-LSE filter (see \cite{Jalalzai} for another alternative). In \cite{Louchet}, the authors proved in the discrete setting that TV-LSE avoids the staircasing effect (in the sense that a region made of 2 pixels or more where the restored image is constant almost never occurs).
As far as we know, there is no result on the subject in the higher dimensional and continuous setting for the classical total variation functional. 
We shall show that staircasing always occurs (even without addition of noise) at global extrema of the datum and at all extrema of the minimizer.
	
In the last section, we investigate further these qualitative properties. An interesting question is to understand how the staircase zones and the discontinuities evolve with the regularization parameter $\la$. The idea is to use the results that are already established for the flow. Unfortunately, in higher dimension the connection between the flow and ROF's energy fails (we give a counterexample). 
However, we are going to prove that this connection actually holds for radial functions. This way, one can prove that the discontinuities form a decreasing sequence, whereas the staircase zones increase with the regularization parameter (which is not true in general). 

Let us remark that all the results are established in dimension $N\geq 2$ since the situation is quite well understood in the one-dimensional case and was widely studied in the literature (see the recent paper \cite{Bonforte} for instance). Indeed, in dimension one, ROF's denoising problem reads as follows
\begin{align}\label{chapTV:ROF_1_D}
\min_{u\in BV(\R)} \int \la|u'(x)|+\frac{1}{2}{(u-g)}^2(x)dx
\end{align}
for $g\in L^2(\R)$ and some positive real $\la$. Let us denote $u_\la$ the minimizer of this problem.

Writing down the Euler-Lagrange (see equation \ref{chapTV:ROF_EL}) one immediately sees
that either $u_\la$ is constant or $z_\la=\sgn(u_\la')$ and as a consequence $u_\la=g$. This is an almost explicit formulation of the solution that tells us that
\begin{itemize}
\item[-] the discontinuities of $u_\la$ are contained in those of $g$,
\item[-] flat zones are created at maxima and minima of $g$.
\end{itemize}
This can be seen in the following simulation:
\begin{figure}[H]   \begin{minipage}[c]{\linewidth}
     \includegraphics[width=\hsize,height=0.1\vsize]{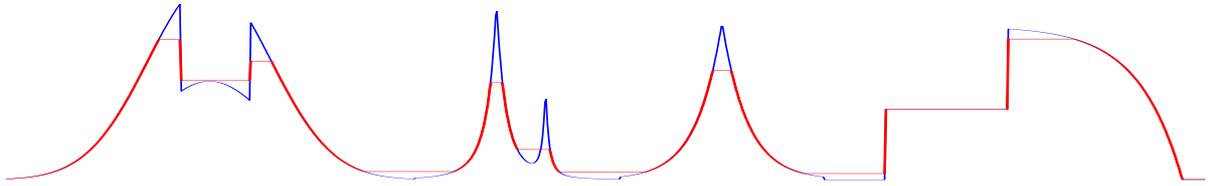}
    \caption
{Minimizer $u_\la$ (in red) of a 1D data $g$ (in blue).}
  \end{minipage}
\end{figure}

The other important result in dimension one is the link with the flow of the total variation. It is known that problem (\ref{chapTV:ROF_1_D}) with $\la=t$ is minimized by $u(t)$, the unique solution of the flow. In the recent article \cite{Briani}, the authors used this relation to prove that for a signal that went through an addition of noise (that is the trajectory of a Wiener process strictly speaking), staircasing occurs almost everywhere. This observation seems to be more general as can be seen in the following test:
\begin{figure}[H]   \begin{minipage}[c]{\linewidth}
     \includegraphics[width=\hsize,height=0.1\vsize]{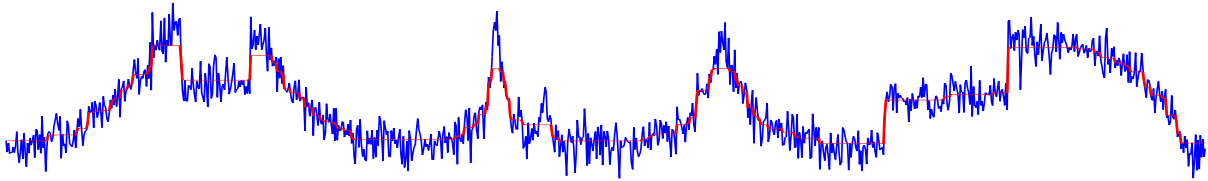}
    \caption
{Minimizer $u_\la$ (in red) of a noisy data $g$ (in blue) is constant almost everywhere.}
  \end{minipage}
\end{figure}

\section{Mathematical preliminary}
Henceforth $\O$ will denote an open subset of $\R^N$ with Lipschitz continuous boundary. The material of this section can be found in the classical textbooks~\cite{Ambrosio00,Giusti,Ziemer} but also in the recent survey~\cite{ChamTV}. 
\subsection{Functions of bounded variation}Let us start with following fundamental definition:
\begin{deff}
A function $u\in L^1(\O)$ is of \textit{bounded variation} in $\O$ (denoted $u\in BV(\O)$) if its distributional derivative $Du$ is a vector-valued Radon measure that has finite total variation \textit{i.e.} $|Du|(\O)<\infty$.
By the Riesz representation theorem, this is equivalent to say that
$$|Du|(\Omega)=\sup\left\{\int_\O u\diverg\vphi\ /\ \vphi\in C_c^\infty(\O,\R^N), \forall x\in\O\ |\vphi(x)|\leq 1\right\}<\infty.$$
In the sequel, the quantity $|Du|(\O)$ also denoted $\int_\O|Du|$ or simply $TV(u)$ will be called the \textit{total variation} of $u$. It is readily checked that ${\|\cdot\|}_1+TV$ defines a norm on $\BV$ that makes it a Banach space.\newline
\end{deff}

A first result that is a straightforward consequence of the dual definition we just gave is a key step to apply the direct method in the calculus of variations:
\newline
\begin{prop}[Sequential lower semicontinuity]\label{chapTV:lsc}
Let ${(u_n)}_{n\in\N}$ be any \linebreak sequence in $BV(\O)$ such that $u_n\to u$ in $L^1(\O)$ then
\begin{align*}
\int_\O|Du|\leq\liminf_{n\to\infty} \int_\O|Du_n|.
\end{align*}
\end{prop}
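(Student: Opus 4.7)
The plan is to exploit the dual characterization of the total variation that is recalled in the definition immediately preceding the statement. This turns the lower semicontinuity into an essentially trivial consequence of the fact that the supremum of a family of continuous functionals is lower semicontinuous.

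First I would fix an arbitrary test vector field $\varphi \in C_c^\infty(\O, \R^N)$ with $|\varphi(x)| \leq 1$ for every $x \in \O$. Since $\text{div}\,\varphi \in C_c^\infty(\O) \subset L^\infty(\O)$ and has compact support in $\O$, the linear functional $v \mapsto \int_\O v \, \diverg \varphi$ is continuous on $L^1(\O)$. In particular, from $u_n \to u$ in $L^1(\O)$ I obtain
\begin{equation*}
\int_\O u \, \diverg \varphi \;=\; \lim_{n \to \infty} \int_\O u_n \, \diverg \varphi.
\end{equation*}

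Next, for each fixed $n$ the dual definition of $|Du_n|(\O)$ gives $\int_\O u_n \, \diverg \varphi \leq |Du_n|(\O)$, because $\varphi$ is an admissible test field. Passing to the liminf in $n$ and combining with the previous equality yields
\begin{equation*}
\int_\O u \, \diverg \varphi \;\leq\; \liminf_{n \to \infty} |Du_n|(\O).
\end{equation*}
The right-hand side is independent of $\varphi$, so I can now take the supremum over all admissible $\varphi$ on the left. By the dual characterization of $|Du|(\O)$, this supremum equals $|Du|(\O)$, and I obtain the claimed inequality
\begin{equation*}
|Du|(\O) \;\leq\; \liminf_{n \to \infty} |Du_n|(\O).
\end{equation*}

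There is no real obstacle here: the statement does not require $u \in BV(\O)$ a priori, and indeed the proof simultaneously gives the inequality and, as a by-product, shows that whenever the right-hand side is finite then $u$ automatically belongs to $BV(\O)$. The only point worth checking is that the interchange of limit and integral in the first step is legitimate, which reduces to the trivial bound $|\int_\O (u_n - u) \diverg \varphi| \leq \|\diverg \varphi\|_\infty \|u_n - u\|_1$.
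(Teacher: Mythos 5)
Your proof is correct and follows exactly the route the paper indicates: the result is presented there as ``a straightforward consequence of the dual definition,'' and your argument---fixing an admissible test field $\varphi$, using the $L^1$-continuity of $v \mapsto \int_\O v \diverg\varphi$, and then taking the supremum over $\varphi$---is precisely that standard duality argument. No gaps; the final remark that finiteness of the right-hand side yields $u \in BV(\O)$ is a correct and welcome by-product.
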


One also has
\begin{prop}[Approximation by smooth functions]\label{chapTV:approxBV}
If $u\in\BV$ then there exists a sequence ${(u_n)}_{n\in\N}$ of functions in $C^\infty(\O)$ such that
\begin{align*}
u_n\to u\text{ in } L^1(\O)\\
\int_\O|\nabla u_n| \to \int_\O |Du|.
\end{align*}
\end{prop}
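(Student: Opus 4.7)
The plan is to use the classical Meyers--Serrin/Anzellotti--Giaquinta construction: mollify $u$ locally with a partition of unity, choosing the mollification parameters small enough that the resulting smooth function is close to $u$ in $L^1$ and has total variation close to $|Du|(\Omega)$. The lower bound on the limit of $\int_\O|\G u_n|$ is free from Proposition~\ref{chapTV:lsc}, so the real work is producing the matching upper bound.

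First I would fix $\e>0$ and, using that $|Du|$ is a finite Radon measure on $\O$, build an increasing exhaustion of $\O$ by open sets $\O_k\Subset\O_{k+1}\Subset\O$ with $\bigcup_k\O_k=\O$ and with $\O_0$ chosen so that $|Du|(\O\setminus\O_0)<\e$. Setting $A_0=\O_1$ and $A_k=\O_{k+1}\setminus\overline{\O_{k-1}}$ for $k\geq 1$ gives a locally finite open cover of $\O$; I would take a smooth partition of unity $\{\zeta_k\}$ subordinate to $\{A_k\}$, so $\sum_k\zeta_k\equiv 1$ and, crucially, $\sum_k\G\zeta_k\equiv 0$ on $\O$. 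Let $\rho_\delta$ be a standard nonnegative mollifier with $\int\rho_\delta=1$ and support in the ball of radius $\delta$.

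Next, for each $k$ I would pick $\delta_k>0$ small enough that $\mathrm{supp}(\rho_{\delta_k}*(\zeta_k u))\subset A_k$ together with the two estimates
\begin{align*}
\|\rho_{\delta_k}*(\zeta_k u)-\zeta_k u\|_{L^1(\O)}&<\e/2^k,\\
\|\rho_{\delta_k}*(u\,\G\zeta_k)-u\,\G\zeta_k\|_{L^1(\O)}&<\e/2^k,
\end{align*}
which is possible because convolution with $\rho_\delta$ converges in $L^1$. I then define $u_\e:=\sum_k \rho_{\delta_k}*(\zeta_k u)$. Local finiteness of the cover makes $u_\e$ smooth on $\O$, and summing the first estimate against $u=\sum_k\zeta_k u$ gives $\|u_\e-u\|_{L^1(\O)}<\e$, so $u_\e\to u$ in $L^1(\O)$ as $\e\to 0$.

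For the total variation, I would compute, using that differentiation commutes with mollification and the Leibniz rule in the distributional sense,
\begin{align*}
\G u_\e=\sum_k\rho_{\delta_k}*(\zeta_k Du)+\sum_k\bigl[\rho_{\delta_k}*(u\,\G\zeta_k)-u\,\G\zeta_k\bigr],
\end{align*}
where the cancellation $\sum_k u\,\G\zeta_k=u\,\G(\sum_k\zeta_k)=0$ has been used to eliminate the problematic pointwise term. Integrating and using $\|\rho_{\delta_k}*\mu\|_{L^1}\leq|\mu|$ for a measure $\mu$, the second sum contributes at most $2\e$, while the first is bounded by $\sum_k|\zeta_k Du|(\O)$; because each point of $\O$ lies in at most a fixed number (say three) of the $A_k$'s, this is at most $|Du|(\O)+C\,|Du|(\O\setminus\O_0)\leq|Du|(\O)+C\e$. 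Hence $\int_\O|\G u_\e|\leq|Du|(\O)+C'\e$, which combined with Proposition~\ref{chapTV:lsc} yields $\int_\O|\G u_\e|\to\int_\O|Du|$. Taking $\e=1/n$ and extracting a diagonal sequence produces the desired $(u_n)$.

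The one delicate point is the upper bound on $\int_\O|\G u_\e|$: without the partition-of-unity trick one would only control the convolution $\rho_\delta*|Du|$, which already gives the inequality but with a factor degenerating near $\partial\O$; the identity $\sum_k\G\zeta_k=0$ and the careful choice of $\delta_k$ per patch are what recover the sharp constant $1$ and make the limit an equality rather than an inequality.
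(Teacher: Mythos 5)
Your argument is correct and complete: it is the standard Meyers--Serrin/Anzellotti--Giaquinta proof (exhaustion, partition of unity with $\sum_k\nabla\zeta_k=0$, patchwise mollification parameters, lower semicontinuity for the reverse inequality), which is exactly the proof given in the classical references \cite{Ambrosio00,Giusti,Ziemer} that the paper cites for this statement without reproving it. The only cosmetic remark is that $\sum_k|\zeta_k Du|(\O)=\int_\O\sum_k\zeta_k\,d|Du|=|Du|(\O)$ exactly, so the overlap-counting step is not even needed.
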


For the direct method to apply it is usually of importance to have a compactness result:
\begin{thm}[Rellich's compactness in $BV$]\label{chapTV:Rellich}
Given a bounded $\O\subset\R^N$ with Lipschitz boundary and any sequence ${(u_n)}_{n\in\N}$ such that \linebreak $\left({\|u_n\|}_{L^1(\O)}+\int_\O |Du_n|\right)$ is bounded,
there exists a subsequence ${(u_{n(k)})}_{k\in\N}$ that converges in $L^1$ to some $u\in\BV$ as $k\to\infty$.
\end{thm}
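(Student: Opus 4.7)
The plan is to reduce Rellich compactness in $BV$ to the Fréchet--Kolmogorov compactness criterion in $L^1$. The essential ingredient is the translation estimate
\begin{align*}
\int_{\R^N}|u(x+h)-u(x)|\,dx\le |h|\,|Du|(\R^N),
\end{align*}
which is elementary for $u\in C^1_c(\R^N)$ by the fundamental theorem of calculus and extends to $BV(\R^N)$ by smooth approximation as in Proposition \ref{chapTV:approxBV} together with the lower semicontinuity of Proposition \ref{chapTV:lsc}.

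To apply such a global estimate to our sequence I would first invoke a $BV$ extension operator for Lipschitz domains: there exist a compact set $K\supset\O$ and a constant $C=C(\O)$ such that for every $u\in\BV$ one can find $Eu\in BV(\R^N)$ with $Eu|_\O=u$, $\mathrm{supp}(Eu)\subset K$, and
\begin{align*}
\|Eu\|_{L^1(\R^N)}+|D(Eu)|(\R^N)\le C\left(\|u\|_{L^1(\O)}+|Du|(\O)\right).
\end{align*}
Setting $v_n:=Eu_n$ produces a sequence in $BV(\R^N)$ that is bounded in $BV$-norm, compactly supported in the fixed set $K$, and agrees with $u_n$ on $\O$.

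The three hypotheses of Fréchet--Kolmogorov are then in place: uniform $L^1$-boundedness, tightness (via the common support $K$), and equicontinuity with respect to translations (via the uniform bound $\|v_n(\cdot+h)-v_n\|_1\le C|h|$ inherited from the translation estimate). A subsequence $v_{n(k)}$ therefore converges in $L^1(\R^N)$ to some $v$, and $u:=v|_\O$ is the desired $L^1(\O)$-limit of $u_{n(k)}$. Proposition \ref{chapTV:lsc} applied to this convergent subsequence gives $|Du|(\O)\le\liminf_k|Du_{n(k)}|(\O)<\infty$, so $u\in\BV$.

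The main obstacle is really the construction of the $BV$ extension operator $E$, which is precisely where the Lipschitz regularity of $\partial\O$ is used; one flattens the boundary locally via a partition of unity and then reflects. Without such an extension one could still run Fréchet--Kolmogorov on an exhaustion of $\O$ by compactly contained open subsets, but then an additional uniform estimate for the $L^1$ mass of $u_n$ in the boundary strip would be needed to upgrade interior $L^1_{\mathrm{loc}}$ convergence to full $L^1(\O)$ convergence, which is more delicate than simply appealing to the extension operator.
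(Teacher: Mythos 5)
Your argument is correct and is precisely the classical proof of this result that the paper implicitly relies on by citing the standard textbooks (extension to $BV(\R^N)$ across the Lipschitz boundary, the translation estimate $\|v(\cdot+h)-v\|_{L^1}\le |h|\,|Dv|(\R^N)$ obtained from smooth approximation, the Fr\'echet--Kolmogorov criterion, and Proposition \ref{chapTV:lsc} to place the limit in $\BV$). The only cosmetic remark is that in establishing the translation estimate for general $BV$ functions you need only the strict approximation of Proposition \ref{chapTV:approxBV} (which gives convergence, not just lower semicontinuity, of the total variations), so the appeal to Proposition \ref{chapTV:lsc} at that particular step is superfluous.
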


\begin{deff}
Let $E\subset\R^N$ be a Borelian set. It is called a \textit{set of finite perimeter} or also \textit{Cacciopoli set} if $u=\chi_E$ is a function of bounded variation. We will call \textit{perimeter} of $E$ in $\O$, and denote $P(E,\O)$ or simply $P(E)$ (if $\O=\R^N$), its total variation.
\end{deff}

The following key result provides a connection between the total variation of a function and the perimeter of its level sets.
\newline
\begin{thm}[Coarea formula]\label{chapTV:CoareaAFP}
If $u\in\BV$, the set $E_t=\{u>t\}$ has finite perimeter for a.e. $t\in\R$ and
$$|Du|(B)=\int_{-\infty}^\infty|D\chi_{\{u>t\}}|(B)dt$$
for any Borel set $B\subset\O$.
\end{thm}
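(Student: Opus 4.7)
The strategy is classical: establish the identity for smooth functions via the classical (differential-geometric) coarea formula, then extend to $BV(\Omega)$ by approximation, using lower semicontinuity of perimeter on one side and the dual definition of total variation on the other. Throughout, I would first work with $B = \Omega$ (or an arbitrary open subset), and promote to Borel sets at the end by a Radon-measure regularity argument.

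\textbf{The inequality $|Du|(A) \leq \int_{-\infty}^\infty |D\chi_{\{u>t\}}|(A)\,dt$ for open $A$.} Given $\varphi \in C_c^\infty(A,\R^N)$ with $|\varphi|\leq 1$, start from the layer-cake identity
\begin{equation*}
u(x) = \int_0^{+\infty} \chi_{\{u>t\}}(x)\,dt - \int_{-\infty}^0 \bigl(1 - \chi_{\{u>t\}}(x)\bigr)\,dt,
\end{equation*}
multiply by $\diverg \varphi$ and apply Fubini (the integrand is in $L^1$ since $u\in L^1$ and $\diverg\varphi$ is bounded with compact support). This yields
\begin{equation*}
\int_A u\,\diverg\varphi \;=\; \int_{-\infty}^{+\infty}\!\Bigl(\int_{\{u>t\}}\diverg\varphi\Bigr) dt \;\leq\; \int_{-\infty}^{+\infty} |D\chi_{\{u>t\}}|(A)\,dt,
\end{equation*}
where I have used the dual definition of perimeter for almost every $t$ (and the fact that $\{u>t\}$ has finite perimeter for a.e.\ $t$, which will follow from the other direction, or can be extracted independently from measurability of the level sets). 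Taking the supremum over admissible $\varphi$ gives the desired bound. Measurability in $t$ of $t\mapsto|D\chi_{\{u>t\}}|(A)$ is obtained by writing the perimeter as a supremum over a countable dense family of test fields.

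\textbf{The reverse inequality.} By Proposition \ref{chapTV:approxBV} pick $u_n\in C^\infty(\Omega)$ with $u_n\to u$ in $L^1$ and $\int_\Omega|\nabla u_n|\to \int_\Omega|Du|$. For smooth functions the classical (Federer) coarea formula gives
\begin{equation*}
\int_\Omega |\nabla u_n| \;=\; \int_{-\infty}^{+\infty} P(\{u_n>t\},\Omega)\,dt.
\end{equation*}
Extracting a subsequence so that $u_n\to u$ pointwise a.e., one checks that for a.e.\ $t\in\R$ (namely all $t$ outside the at-most-countable set of atoms of the pushforward measure $u_\#\mathcal L^N$) one has $\chi_{\{u_n>t\}}\to\chi_{\{u>t\}}$ in $L^1(\Omega)$. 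Proposition~\ref{chapTV:lsc} applied to characteristic functions then gives $P(\{u>t\},\Omega)\leq \liminf_n P(\{u_n>t\},\Omega)$ for a.e.\ $t$. Fatou's lemma now yields
\begin{equation*}
\int_{-\infty}^{+\infty} P(\{u>t\},\Omega)\,dt \;\leq\; \liminf_n \int_{-\infty}^{+\infty} P(\{u_n>t\},\Omega)\,dt \;=\; \lim_n \int_\Omega |\nabla u_n| \;=\; |Du|(\Omega).
\end{equation*}
In particular, $\{u>t\}$ has finite perimeter for a.e.\ $t$, and combined with the first step we obtain equality on $\Omega$, and by the same argument applied to any open $A\subset\Omega$, on every open set.

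\textbf{Extension to Borel sets.} Both sides of the identity define Radon measures on $\Omega$: the left-hand side is $|Du|$ by definition, and the right-hand side is the integral, against Lebesgue measure in $t$, of the Radon measures $|D\chi_{\{u>t\}}|$, whose measurability in $t$ has been noted. Two Radon measures that coincide on open subsets of $\Omega$ coincide on all Borel sets, which finishes the proof. \textbf{The main obstacle} is the reverse inequality: one has to justify the interchange of the $\liminf$ with the $t$-integral and, simultaneously, that $\chi_{\{u_n>t\}}\to \chi_{\{u>t\}}$ in $L^1$ for \emph{a.e.}\ $t$ (not merely for some $t$); this is the only place in the argument where a subsequence extraction and a careful discussion of level sets is required.
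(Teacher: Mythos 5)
The paper does not prove this theorem: it is quoted as a classical result, with the reader referred to the standard references (Ambrosio--Fusco--Pallara, Giusti, Ziemer), so there is no in-paper argument to compare against. Your proposal correctly reproduces the standard textbook proof (layer-cake plus Fubini for one inequality, smooth approximation plus the classical coarea formula, lower semicontinuity and Fatou for the other, then agreement of Radon measures on open sets), and the two points you flag as delicate are genuinely the only ones needing care. For the a.e.-$t$ convergence of level sets, a slightly cleaner route than pointwise convergence of $u_n$ is the Fubini identity
\begin{equation*}
\int_{-\infty}^{+\infty}\bigl\|\chi_{\{u_n>t\}}-\chi_{\{u>t\}}\bigr\|_{L^1(\Omega)}\,dt=\|u_n-u\|_{L^1(\Omega)},
\end{equation*}
which gives $\chi_{\{u_n>t\}}\to\chi_{\{u>t\}}$ in $L^1(\Omega)$ for a.e.\ $t$ along a subsequence, with no discussion of atoms of $u_\#\mathcal{L}^N$ required.
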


Finally let us recall the following
\newline
\begin{thm}[Sobolev inequalities]\label{chapTV:Sobo}
Let $u\in\BV$ and let us denote $\langle u \rangle=\frac{1}{|\O|}\int_\O u$.
For a bounded Lipschitz domain $\O$, the following Poincaré inequality holds 
\begin{align*}
\|u-\langle u \rangle\|_{L^{\frac{N}{N-1}}(\O)}\leq C(N,\O)\int_\O |Du|.
\end{align*}
If $\O=\R^N$, one has the Sobolev inequality
\begin{align*}
\|u\|_{L^{\frac{N}{N-1}}(\R^N)}\leq C(N)\int_\O |Du|.
\end{align*}
In particular, if $u=\chi_E$, one gets the isoperimetric inequality
\begin{align*}
|E|^{\frac{N-1}{N}}\leq C(N)P(E,\O).
\end{align*}
\end{thm}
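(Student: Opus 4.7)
The plan is to prove the Sobolev inequality on $\R^N$ first, then the Poincaré inequality on bounded Lipschitz domains, and finally deduce the isoperimetric inequality as an immediate special case.

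For the Sobolev inequality on $\R^N$, I would begin with $u\in C_c^\infty(\R^N)$ and apply the classical Gagliardo--Nirenberg argument: bound $|u(x)|\leq\int_{\R}|\partial_i u|\,dy_i$ along each coordinate direction, multiply the $N$ resulting bounds raised to the power $1/(N-1)$, and integrate iteratively using the generalized Hölder inequality to reach
$$\|u\|_{L^{N/(N-1)}(\R^N)}\leq C(N)\int_{\R^N}|\nabla u|.$$
To pass from $C_c^\infty$ to $BV(\R^N)$, I would invoke Proposition \ref{chapTV:approxBV} to get smooth approximants $u_n$ with $u_n\to u$ in $L^1$ and $\int|\nabla u_n|\to\int_{\R^N}|Du|$; these can be made compactly supported by a standard truncation-and-mollification step, which adds only a vanishing error to the total variation since $u\in L^1$. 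Extracting a pointwise a.e.\ subsequence and invoking Fatou's lemma on the left closes the argument.

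For the Poincaré inequality on a bounded connected Lipschitz domain $\Omega$, I would run a compactness--contradiction argument. Because Theorem \ref{chapTV:Rellich} only provides $L^1$ compactness, I would first establish the $L^1$ version: assuming no constant works, there is a sequence $(u_n)$ with $\langle u_n\rangle=0$, $\|u_n\|_{L^1(\Omega)}=1$, and $\int|Du_n|\to 0$; Theorem \ref{chapTV:Rellich} gives a subsequence converging in $L^1$ to some $u$, lower semicontinuity (Proposition \ref{chapTV:lsc}) forces $\int|Du|=0$ so that $u$ is constant, and the zero-mean condition passes to the limit, yielding $u=0$ in contradiction with $\|u\|_{L^1}=1$. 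To upgrade to $L^{N/(N-1)}$, I would apply a bounded BV extension operator $E\colon BV(\Omega)\to BV(\R^N)$ to $u-\langle u\rangle$, invoke the $\R^N$-Sobolev inequality on the extension, and absorb the resulting $L^1$ term using the $L^1$-Poincaré just obtained.

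The isoperimetric inequality follows immediately by taking $u=\chi_E$ in the $\R^N$-Sobolev estimate, since $\|\chi_E\|_{L^{N/(N-1)}(\R^N)}=|E|^{(N-1)/N}$ and $\int|D\chi_E|=P(E)$. The step I expect to be the main technical obstacle is the construction of the BV extension operator used to upgrade the Poincaré inequality, which requires unfolding the Lipschitz boundary via local charts and a partition of unity; this is where the regularity hypothesis on $\partial\Omega$ actually enters. One could instead try to establish a relative isoperimetric inequality on $\Omega$ directly and bootstrap via the coarea formula (Theorem \ref{chapTV:CoareaAFP}), but any such route ultimately relies on quantitative geometric information about $\partial\Omega$.
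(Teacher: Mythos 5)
The paper does not prove this statement at all: it is recalled as classical background (the section opens by referring the reader to the textbooks \cite{Ambrosio00,Giusti,Ziemer}), so there is no proof of record to compare against. Your argument is the standard textbook route and is essentially correct: Gagliardo--Nirenberg on $C_c^\infty(\R^N)$ followed by approximation (Proposition \ref{chapTV:approxBV}), truncation, and Fatou gives the Sobolev inequality on $BV(\R^N)$; the compactness--contradiction argument via Theorem \ref{chapTV:Rellich} and Proposition \ref{chapTV:lsc} gives the $L^1$ Poincar\'e inequality; and the upgrade to the $L^{N/(N-1)}$ norm via a bounded extension operator $BV(\O)\to BV(\R^N)$, absorbing the $L^1$ term, is the usual way to conclude. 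Three points deserve care. First, connectedness of $\O$ is genuinely needed (and you correctly add it): without it, a function taking distinct constant values on different components has $Du=0$ but nonzero mean oscillation, so the inequality fails; the paper's word ``domain'' should be read as implying this. Second, the truncation step on $\R^N$ needs a diagonal argument (fix $n$, let the cutoff radius $R\to\infty$ using $u_n\in L^1$, then diagonalize), which you gesture at but do not spell out; it is routine. Third, the existence of the $BV$ extension operator for Lipschitz domains is itself a nontrivial classical theorem and is where all the boundary regularity is consumed; you rightly flag it as the main technical obstacle, but in a fully self-contained proof it would have to be constructed (local graphs, reflection, partition of unity) or replaced by a direct proof of the relative isoperimetric inequality plus the coarea formula (Theorem \ref{chapTV:CoareaAFP}), as you note. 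With those caveats the proposal is sound and consistent with the standard references the paper cites.
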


\subsection{BV functions in image processing}

The classical model of a functional where total variation plays a key role is the so-called Rudin-Osher-Fatemi energy:
\begin{align}\label{chapTV:ROF}
\tag{ROF}
\E_\lambda(u)=\lambda\int_\O|Du|+\frac{1}{2}{\|u-g\|}^2_2
\end{align}
In the sequel we shall be interested in minimizing this energy in $BV(\O)$ for some positive real $\lambda$. By Proposition \ref{chapTV:lsc}, there is a minimizer in $BV(\O)$, denoted $u_\lambda$ in the sequel (uniqueness follows from the strict convexity of the energy).

The energy $\E_\la$ is not smooth though convex so it is still possible to get the Euler-Lagrange equation as follows:
\newline
\begin{prop}
Function $u_\la$ minimizes $\E_\la$ in $BV(\O)$ if and only if there exists $z_\la\in L^{\infty}(\O,\R^N)$ such that
\begin{align}\label{chapTV:ROF_EL}
\begin{cases}
-\la\diverg z_\la+u_\la=g & \text{ in }\O,\\
|z_\la|\leq 1 & \text{ in }\O,\\
z_\la\cdot Du_\la=|Du_\la|,\\
z_\la\cdot\nu=0 & \text{ on }\partial\O,\\
\end{cases}
\end{align}
with $\nu$ denoting the inner normal to $\O$.
\end{prop}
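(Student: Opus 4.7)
The plan is to derive the system from the subdifferential condition $0\in\partial\E_\la(u_\la)$ in $L^2(\O)$. Since $\E_\la$ is proper, convex and lower semi-continuous (by Proposition \ref{chapTV:lsc}), and the quadratic fidelity is Fréchet differentiable with gradient $u_\la-g$, the Moreau-Rockafellar sum rule rewrites the minimality of $u_\la$ as the $L^2$-subgradient inclusion
\begin{align*}
\frac{g-u_\la}{\la}\in\partial TV(u_\la).
\end{align*}
The entire statement then reduces to the characterization
\begin{align*}
\partial TV(u)=\left\{-\diverg z\ :\ z\in L^\infty(\O,\R^N),\ |z|\leq 1,\ z\cdot\nu=0,\ z\cdot Du=|Du|\right\},
\end{align*}
after which setting $-\diverg z_\la=(g-u_\la)/\la$ delivers $z_\la$ with all four required properties.

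For this characterization I would use the dual formulation recalled in the Definition: $TV$ is, on $L^2(\O)$, the support function of the convex set $K_0=\{-\diverg\vphi\ :\ \vphi\in C_c^\infty(\O,\R^N),\ |\vphi|\leq 1\}$. A weak-$*$ compactness argument in $L^\infty(\O,\R^N)$ identifies the $L^2$-closure $K$ of $K_0$ with $\{-\diverg z\ :\ z\in L^\infty(\O,\R^N),\ |z|\leq 1,\ z\cdot\nu=0\}$, the normal trace being well-defined in the sense of Anzellotti since such $z$ automatically have divergence in $L^2(\O)$. For a support function $TV=\sigma_K$ one has the textbook identity that $w\in\partial TV(u)$ iff $w\in K$ and $\langle w,u\rangle_{L^2}=TV(u)$; the saturation of this dual inequality then translates, via Anzellotti's Green formula, into the measure identity $z\cdot Du=|Du|$.

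The ``if'' direction can alternatively be proved by a direct convexity argument avoiding any subdifferential machinery: the bounds $|z_\la|\leq 1$ and $z_\la\cdot\nu=0$ yield $-\int_\O v\diverg z_\la\leq TV(v)$ for every $v\in BV(\O)$, the saturation gives equality at $v=u_\la$, and combining these with the PDE and the elementary inequality $\tfrac{1}{2}\|v-g\|_2^2-\tfrac{1}{2}\|u_\la-g\|_2^2\geq\int_\O(u_\la-g)(v-u_\la)$ produces $\E_\la(v)\geq\E_\la(u_\la)$ in a few lines. The main obstacle is not conceptual but technical: making precise the objects $z\cdot\nu$ and $z\cdot Du$ when $z$ is only in $L^\infty$ and $Du$ is only a Radon measure, which is the content of Anzellotti's theory of weakly differentiable vector fields with $L^2$ divergence (defining $z\cdot Du$ as a Radon measure on $\O$ and $z\cdot\nu$ as an $L^\infty(\partial\O)$ function through a generalized Green formula). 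Once this framework is imported, every step above becomes a routine verification.
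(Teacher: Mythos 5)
Your argument is correct, but note that the paper itself offers no proof of this proposition: it is stated in the preliminaries as classical material, with references to the standard textbooks and to Anzellotti only for the meaning of the pairing $z_\la\cdot Du_\la$. What you have written is essentially the standard proof from those references (Andreu--Caselles--Maz\'on, Chambolle's survey): reduce to $\frac{g-u_\la}{\la}\in\partial TV(u_\la)$ by the Moreau--Rockafellar sum rule, view $TV$ as the support function of the closed convex set $K=\overline{K_0}^{L^2}$, use $\partial\sigma_K(u)=\{w\in K:\langle w,u\rangle=\sigma_K(u)\}$, and convert the saturation into $z_\la\cdot Du_\la=|Du_\la|$ through Anzellotti's Green formula. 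One small overclaim: a weak-$*$ compactness argument only yields the inclusion $K\subset\{-\diverg z:\ z\in L^\infty,\ |z|\le 1,\ \diverg z\in L^2,\ z\cdot\nu=0\}$; the reverse inclusion requires a genuine approximation lemma (mollification together with a boundary cut-off preserving the $L^\infty$ bound and the $L^2$ convergence of the divergences), not mere compactness. Fortunately only the inclusion you actually obtain is needed for the ``only if'' direction, and your direct convexity argument for the ``if'' direction bypasses $K$ entirely, so the proof stands as written; I would simply either prove only the inclusion you use or cite the approximation lemma explicitly.
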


In the sequel, we shall only consider Neumann boundary conditions but we could of course take into account Dirichlet or more complicated conditions.
\newline
\begin{rmq}
$z_\la\cdot Du_\la$ is the pairing of a bounded function with a bounded measure and should be understood in the sense of Anzelotti \cite{Anzelotti}.
\end{rmq}

By the coarea formula, the superlevel sets $\{u_\lambda>t\}$ are sets of finite perimeter for almost every $t$ that satisfy the following minimal surface problem:
\newline
\begin{thm}\label{chapTV:LevelSetPb}
Let $u_\la$ be the minimizer of (\ref{chapTV:ROF}). Then for any $t\in\R$, $\{u_\la>t\}$ (resp. $\{u_\la\geq t\}$) is the minimal (resp. maximal) solution of the minimal surface problem
\begin{align}\label{chapTV:LevelSetPbEq}
\min_E \la P(E,\O)+\int_E \left(t-g(x)\right)dx
\end{align}
over all sets of finite perimeter in $\O$. Moreover $\{u_\la>t\}$ being defined up to negligible sets, there exists an open representative.
\end{thm}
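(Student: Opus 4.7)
The strategy is the classical level-set decomposition of the ROF energy. The coarea formula (Theorem~\ref{chapTV:CoareaAFP}) already gives $\la\int_\Omega|Du|=\int_\R \la P(\{u>t\},\Omega)\,dt$, and a Fubini / layer-cake argument applied to the identity $\tfrac{1}{2}(u(x)-g(x))^2-\tfrac{1}{2}g(x)^2=\int_0^{u(x)}(t-g(x))\,dt$ yields, modulo a constant $C(g)$ depending only on $g$,
$$\frac{1}{2}\int_\Omega(u-g)^2\,dx=\int_\R\int_{\{u>t\}}(t-g(x))\,dx\,dt+C(g).$$
Summing,
$$\E_\la(u)=\int_\R F_t(\{u>t\})\,dt+C(g),\qquad F_t(E):=\la P(E,\Omega)+\int_E(t-g)\,dx,$$
which presents $\E_\la$ as a continuous superposition in $t$ of the set functionals $F_t$. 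This identity is the engine of the whole proof.

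Existence of a minimizer of $F_t$ for each $t\in\R$ follows from the direct method (Proposition~\ref{chapTV:lsc}, Theorem~\ref{chapTV:Rellich}, with Theorem~\ref{chapTV:Sobo} to control minimising sequences). The core tool is then a \emph{comparison principle}: if $s<t$, $E$ minimizes $F_s$ and $F$ minimizes $F_t$, then $F\subset E$ up to a null set. This is obtained by testing $E$ against $E\cup F$ in $F_s$ and $F$ against $E\cap F$ in $F_t$, and using the submodularity of the perimeter $P(E\cup F)+P(E\cap F)\leq P(E)+P(F)$ to reach
$$F_s(E\cup F)+F_t(E\cap F)\leq F_s(E)+F_t(F)+(s-t)|F\setminus E|,$$
so that $s-t<0$ combined with minimality forces $|F\setminus E|=0$. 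The special case $s=t$ shows that the set of minimizers of $F_t$ is stable under unions and intersections and thus is a complete lattice. Using the comparison principle one selects a monotone non-increasing family $(E_t)_t$ of minimizers, sets $u^{\ast}(x):=\sup\{t:x\in E_t\}$, and verifies via the decomposition identity applied to $u^{\ast}$ that $u^{\ast}$ minimizes $\E_\la$; strict convexity then forces $u^{\ast}=u_\la$ a.e., which identifies the superlevel sets of $u_\la$ with minimizers of $F_t$.

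For the extremality statements, given any minimizer $F$ of $F_t$ the comparison principle applied at levels $s<t$ yields $F\subset\{u_\la>s\}$, and letting $s\uparrow t$ gives $F\subset\{u_\la\geq t\}$; the symmetric argument at levels $s>t$ gives $\{u_\la>t\}\subset F$, establishing the asserted minimality and maximality. As for the open representative, the volume integrand in $F_t$ is only an absolutely continuous perturbation of the perimeter, so minimizers of $F_t$ are quasi-minimizers of perimeter; they then enjoy the classical density estimates $|B_r(x)\cap\{u_\la>t\}|\geq c\,r^N$ at every point $x$ of the measure-theoretic interior and every $r<r_0$, and the set of such points is open and coincides with $\{u_\la>t\}$ up to a null set. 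The main obstacle I anticipate is making the layer-cake identity of the first paragraph fully rigorous: when $u$ and $g$ change sign the natural $t$-integrand is not absolutely integrable on $\R$ and one must pair $\chi_{\{u>t\}}$ with the reference $\chi_{\{0>t\}}$ (absorbing the divergent but $u$-independent part into $C(g)$), or first reduce to bounded competitors via the maximum principle $\|u_\la\|_\infty\leq\|g\|_\infty$; once this bookkeeping is settled, the rest of the argument runs on the formal lattice and quasi-minimality machinery.
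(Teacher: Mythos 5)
The paper does not actually prove Theorem \ref{chapTV:LevelSetPb}: it is stated as a known result (the classical level-set characterization of the ROF minimizer going back to Chambolle and Alter--Caselles--Chambolle), so there is no internal proof to compare against. Your reconstruction is the standard argument from that literature and is essentially correct: the layer-cake decomposition $\E_\la(u)=\int_\R F_t(\{u>t\})\,dt+C(g)$, the submodularity-based comparison principle, the synthesis of a minimizer of $\E_\la$ from a monotone selection $(E_t)_t$ followed by identification with $u_\la$ by uniqueness, and the density estimates (the paper's own Lemma \ref{chapTV:density}) for the open representative are exactly the right ingredients; your key inequality $F_s(E\cup F)+F_t(E\cap F)\le F_s(E)+F_t(F)+(s-t)\,|F\setminus E|$ is computed correctly, and you rightly flag the integrability bookkeeping in the layer-cake identity as the only genuinely delicate normalization.

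Two points deserve more care than your sketch gives them. First, the synthesis step only yields that $\{u_\la>t\}$ minimizes $F_t$ for \emph{almost every} $t$, whereas the theorem asserts it for \emph{every} $t$; you must pass to the limit along good levels $s\downarrow t$ (resp.\ $s\uparrow t$), using lower semicontinuity of the perimeter along $\{u_\la>s\}\uparrow\{u_\la>t\}$ together with continuity of $t\mapsto\min F_t$ (Lipschitz with constant $|\O|$ on bounded domains, or controlled via the measure bounds as in Theorem \ref{chapTV:staircase_bound} when $\O=\R^N$). Your extremality paragraph implicitly contains this, but it should be made explicit since ``for any $t\in\R$'' is part of the statement. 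Second, the set of points where the lower density bound $|B(x,r)\cap E|\ge c\,r^N$ holds is not obviously open; the correct route is that the two-sided density estimates force the closure of the reduced boundary to be Lebesgue-negligible, so that the measure-theoretic interior $\{x:\exists r>0,\ |B(x,r)\setminus E|=0\}$ is an open representative. Both are standard repairs; with them your proof is complete and matches the argument the paper is implicitly citing.
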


\section{Staircasing for the denoising problem}\label{chapTV:chap_staircasing}
As recalled in our introduction, it has been long observed that the minimizer of this problem has unnatural homogeneous regions referred to as the \textit{staircase} regions. The problem of proving the existence of these constant zones has been tackled in the discrete setting in \cite{Nikolova00} but almost nothing is known in the continuous setting. We will prove in this section that the staircasing phenomenon is unavoidable in the continuous setting in dimension $N\geq2$ even though there is no addition of noise.\\

\noindent Staircasing through the level lines:
\begin{figure}[H]   \begin{minipage}[c]{.49\linewidth}
     \includegraphics[width=\linewidth]{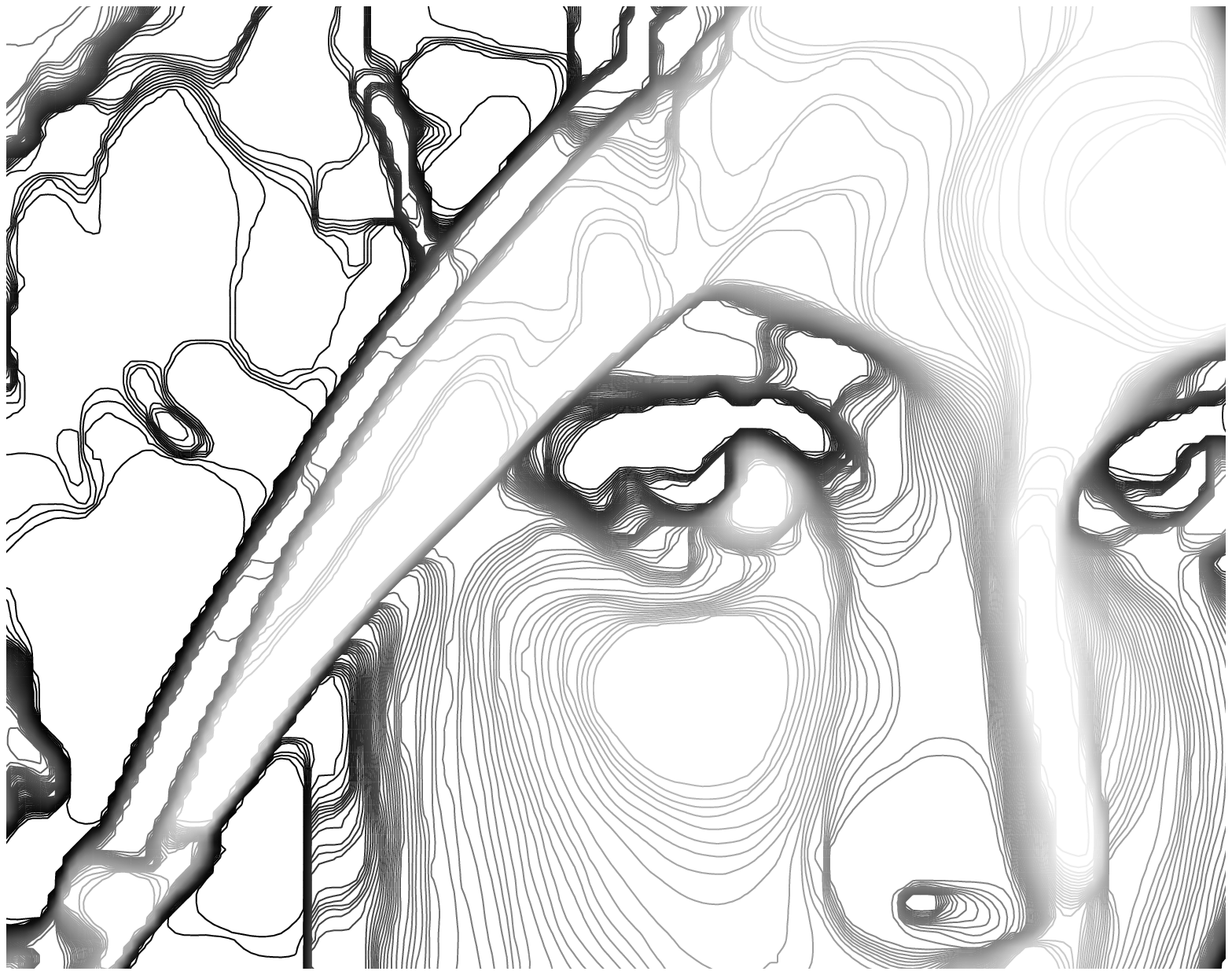}
\vspace{-0.4cm}
    \caption[What will appear in the list of figures.]
{Level lines for the TV-minimizer of the \textit{Lena} image}
  \end{minipage}
\hfill
  \begin{minipage}[c]{.49\linewidth}
     \includegraphics[width=\linewidth]{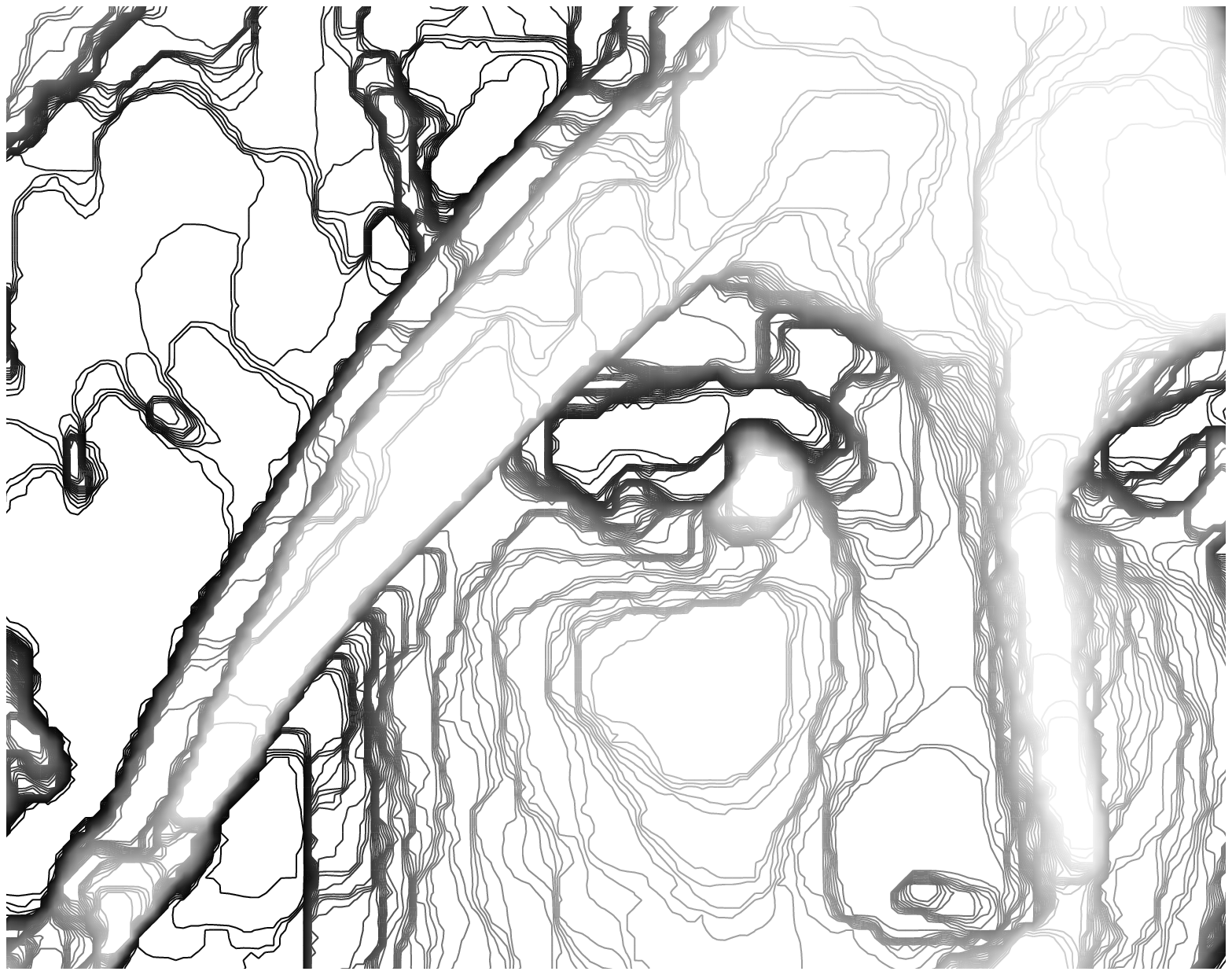}
\vspace{-0.4cm}
    \caption{Level lines for the TV-minimizer of a noisy image}
  \end{minipage} 
\end{figure}

The previous images show that the level lines of the minimizers miss large regions that are therefore constant. The idea of looking at the level sets gives a good intuition but it is also a key point in our analysis to establish results from a theoretical point of view.

\subsection{Staircasing at extrema} 
Let us start by stating one of the main results of this section: 

\begin{thm}\label{chapTV:staircase_bound} Let $g\in L^2(\R^N)$ bounded from above. Then the associated minimizer $u_\lambda$ of (\ref{chapTV:ROF}) 
($\lambda>0$) is also bounded from above, attains its maximum and one has
$$|\{u_\lambda=\max u_\lambda\}|>0.$$ 
In particular $Du_\lambda=0\text{ in } \{u_\lambda=\displaystyle\max u_\lambda\}$. A similar result holds for the minimum when $g$ is bounded from below.
\end{thm}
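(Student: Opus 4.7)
The plan is twofold: bound $u_\la$ from above, then show the maximum level set has positive measure. Throughout I write $M:=\mathrm{ess\,sup}_{\R^N}\,g<\infty$.

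\textbf{Boundedness.} A truncation comparison suffices. Setting $\tilde u:=u_\la\wedge M$, the coarea formula gives
$$TV(\tilde u)=\int_{-\infty}^{M}P(\{u_\la>t\})\,dt\le TV(u_\la),$$
with strict inequality as soon as $|\{u_\la>M\}|>0$. Pointwise on that set, $g\le M<u_\la$, so $|\tilde u-g|=M-g<u_\la-g$ and $\|\tilde u-g\|_2\le\|u_\la-g\|_2$ with the same strictness. Strict convexity of $\E_\la$ then forces $u_\la\le M$ a.e., and I set $M^*:=\mathrm{ess\,sup}\,u_\la\le M$.

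\textbf{Isoperimetric bound on super-level sets.} For the positive-measure conclusion I would use the level-set formulation (Theorem~\ref{chapTV:LevelSetPb}): $E_t:=\{u_\la>t\}$ minimises $E\mapsto\la P(E)+\int_E(t-g)\,dx$. Comparing with $E=\emptyset$ and using $g\le M$ yields $\la P(E_t)\le(M-t)|E_t|$. For a.e.\ $t$, coarea ensures $P(E_t)<\infty$, and the isoperimetric inequality of Theorem~\ref{chapTV:Sobo} then produces
$$|E_t|\;\ge\;\Bigl(\tfrac{\la}{C_N(M-t)}\Bigr)^{N}\qquad(\ast).$$

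\textbf{Passage to the limit.} Finally I would let $t\uparrow M^*$ along admissible values. The integrability $u_\la\in L^2(\R^N)$ forces $M^*\ge 0$ (otherwise $|u_\la|^2\ge(M^*)^2$ on all of $\R^N$); choosing $t_0>0$ small yields $|E_{t_0}|\le\|u_\la\|_2^2/t_0^2<\infty$, so the decreasing sets $E_t$ satisfy $|E_t|\to|\{u_\la\ge M^*\}|=|\{u_\la=M^*\}|$ by continuity of measure from above. In the generic range $M^*<M$, passing $(\ast)$ to the limit gives
$$|\{u_\la=M^*\}|\;\ge\;\Bigl(\tfrac{\la}{C_N(M-M^*)}\Bigr)^{N}>0,$$
as required. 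The possibility $M^*=M>0$ is incompatible with $(\ast)$ combined with the finite $L^2$ upper bound on $|E_t|$ and so cannot occur; the degenerate corner $M=M^*=0$ (forcing $g\le 0$) is handled by a short separate argument, using the same isoperimetric trick applied to $\{u_\la<0\}$ and data $-g$.

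The main obstacle is precisely this passage to the limit---ensuring that no mass of $E_t$ escapes to infinity as $t\uparrow M^*$, which is exactly where the $L^2$-integrability of $u_\la$ enters critically. The statement for the minimum then follows by applying the whole argument to $-u_\la$ with data $-g$.
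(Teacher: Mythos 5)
Your proposal follows essentially the same route as the paper: the level-set formulation of Theorem \ref{chapTV:LevelSetPb}, comparison with the empty set, the isoperimetric inequality, and the resulting bound $(\ast)$, which is exactly inequality (\ref{chapTV:ineg_niv}); the passage to the limit $t\uparrow\sup u_\la$ and the exclusion of $\sup u_\la=\sup g$ are also the paper's steps. Your truncation argument for $u_\la\le\sup g$ is a clean alternative to the paper's (which reads boundedness off from $|E_t|=0$ for $t\ge\sup g$), and you are right to insist that the continuity-from-above step needs a superlevel set of finite measure, a point the paper leaves implicit.

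The one genuine soft spot is the endgame when $M^*:=\sup u_\la=0$. First, this case is not confined to $M=M^*=0$: one can have $M>0$ and $M^*=0$ (e.g.\ $g=\chi_{\D}$ and $\la\ge\la^*$, so that $u_\la\equiv0$), and then your ``generic range $M^*<M$'' argument breaks down because there is no $t_0\in(0,M^*)$ to anchor continuity from above --- the sets $E_t$ with $t<0$ may genuinely have infinite measure. Second, the deferred ``short separate argument'' cannot be ``the same isoperimetric trick applied to $\{u_\la<0\}$ and data $-g$'': that trick needs the datum bounded \emph{above}, and $-g$ is only bounded below here. When $M^*=0$ one has $u_\la\le0$ a.e., so $\{u_\la=\max u_\la\}=\R^N\setminus\{u_\la<0\}$, which settles the claim at once if $|\{u_\la<0\}|<\infty$; but ruling out the scenario $u_\la<0$ a.e.\ with $\sup u_\la=0$ (where the maximum would not even be attained) still requires an argument you have not supplied. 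To be fair, the paper's own proof glosses over the same point: it asserts $\sup u_\la<\sup g$ and then divides by $\sup g-\sup u_\la$, which fails for instance for $g=-\chi_\D$, where both suprema equal $0$. So your proof is correct and identical in spirit to the paper's on the main case $\sup u_\la>0$, with the degenerate case $\sup u_\la=0$ left open on both sides.
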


\begin {proof}
We denote $m_g:=\sup g$. 
Proving that there is a staircase amounts to show that the superlevel sets vanish at some point. Let us therefore consider  the superlevel $E_t^\lambda:=\{u_\lambda>t\}$ for some real $t$. 
By Theorem \ref{chapTV:LevelSetPb}, we know that $E_t^\lambda$ minimizes the following problem
\begin{align*}
\min_E\lambda P(E)+\int_E(t-g(x))dx.
\end{align*}
In particular,
\begin{align}\label{chapTV:eqPer}
\lambda P\left(E_t^\lambda\right)+\int_{E_t^\lambda}(t-g(x))dx\leq 0.
\end{align}
By the isoperimetric inequality (see Theorem \ref{chapTV:Sobo}) and equation (\ref{chapTV:eqPer}), we get 
$${|E_t^\lambda|}^{\frac{N-1}{N}}\leq P\left(E_t^\lambda\right)\leq\frac{1}{\lambda}\int_{E_t^\lambda}(g(x)-t)dx\leq|E_t^\lambda|\frac{m_g-t}{\lambda}.$$
This implies immediately that ${|E_t^\lambda|}=0$ for any $t\geq m_g$. Actually, by a thresholding argument, one can show that $E_t^\lambda=\emptyset$, but this additional information will not be needed here.\\

If for some $t\in\R$, $|E_t^\lambda|\not=0$ then
\begin{align}\label{chapTV:ineg_niv}
{|E_t^\lambda|}\geq{\left(\frac{\lambda}{m_g-t}\right)}^{N}. 
\end{align}
Let us assume that we have $|E_t^\lambda|\not=0$ for any $t_0<t<m_g$.
Then letting $t\to m_g$ in (\ref{chapTV:ineg_niv}) contradicts $|E_t^\lambda|<+\infty$. 
So, if we set $m_u:=\sup u_\lambda$, 
we therefore have $m_u=\sup\{t,|E_t^\lambda|\not=0\}<m_g$.\\

Now, we would like to prove that $|\{u_\la=m_u\}|\not= 0$. This is indeed true since
\begin{align*}
\{u_\la=m_u\}=\bigcap_{n\in\N}E_{m_u-\frac{1}{n}}^\lambda
\end{align*}
and by (\ref{chapTV:ineg_niv}) it follows
\begin{align*}
|\{u_\la=m_u\}|\geq\lim_{n\to\infty}\left|E_{m_u-\frac{1}{n}}^\lambda\right|\geq{\left(\frac{\lambda}{m_g-m_u}\right)}^{N}>0.
\end{align*}

By the coarea formula, we then simply have
\begin{align*}
\int_{\{u_\la=m_u\}}|Du_\lambda|&=\int_{-\infty}^{+\infty}P(E_t^\lambda\cap \{u_\la=m_u\})dt\\
&=\int_{m_u}^{m_g}P(E_t^\lambda)dt\\
&=0.
\end{align*}
This implies that $Du_\lambda=0$ on the staircase set $\{u_\la=m_u\}$ that has positive Lebesgue measure.
\end {proof}

\begin{rmq}\label{chapTV:rmq_stairc}
\begin{trivlist}
\item [$(i)$] In case $g\in L^\infty(\R^N)$ is not assumed to be constant, we actually proved that $u$ is also bounded and that we have
$$\inf_{\R^N} g < \min_{\R^N} u_\lambda \leq \max_{\R^N} u_\lambda < \sup_{\R^N} g.$$
Moreover, inequality (\ref{chapTV:ineg_niv})
gives a lower bound for the staircasing effect namely
\begin{align}\label{chapTV:staircase_quant}
|\{Du_\lambda=0\}|&\geq{|\{u_\lambda=\min_{\R^N} u_\lambda\}\cup\{u_\lambda=\max_{\R^N} u_\lambda\}|}\notag\\
&\geq2{\left(\frac{\lambda}{\sup_{\R^N} g-\inf_{\R^N} g}\right)}^{N}.
\end{align}

\item [$(ii)$] Our proof is rather simple but as was stated in Theorem \ref{chapTV:LevelSetPb}, it is possible to prove that $\{u_\lambda=\max_\O u_\lambda\}$ has an open representative. However, the proof is not direct and relies on the well-known density estimate for quasi-minimizers of the perimeter (see Lemma \ref{chapTV:density} and the proof of Theorem \ref{chapTV:thm_staircase_max}).\\  

\end{trivlist}
\end{rmq}
As we just saw, Theorem \ref{chapTV:staircase_bound} furnishes a way to quantify the staircase effect through the inequality (\ref{chapTV:staircase_quant}). Nonetheless, this bound is not sharp as can be seen for $g$ the characteristic of a convex set (see below). The reason is that we do not take into account creation of flat zones occurring near local extrema. This is the object of the following theorem
\newline
\begin{thm}\label{chapTV:thm_staircase_max}
Let $g\in L^p(\O)$ with $p\in(N,+\infty]$ and $u_\lambda$ with $\lambda>0$ the corresponding minimizer of (\ref{chapTV:ROF}). If $x_0$ is a local maximum of $u_\lambda$ then 
 there exists $C>0$ dpending only on $N$ such that
\begin{align*}
\liminf_{r\to 0 }\frac{|\{u_\la=u_\la^+(x_0)\}\cap B(x_0,r)|}{|B(x_0,r)|}>C.
\end{align*}
In particular, for any radius $\rho>0$,
\begin{align*}
|\{u_\la=u_\la^+(x_0)\}\cap B(x_0,\rho)|>0.
\end{align*}
Moreover, if $x_0\not\in J_{u_\la}$ there exists $\mathcal{N}(x_0)$ a neighborhood of $x_0$ such that
\begin{align*}
D u_\lambda=0 \text{ on } \mathcal{N}(x_0).
\end{align*}
A similiar result holds for a local minimum.\\
\end{thm}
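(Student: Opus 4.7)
The plan is to exploit the fact that every superlevel set $E_t^\lambda := \{u_\lambda > t\}$ is, by Theorem~\ref{chapTV:LevelSetPb}, a minimizer of $F \mapsto \lambda P(F,\Omega) + \int_F (t-g)\,dx$, and to upgrade this into a density estimate of quasi-minimizer type. Comparing $E_t^\lambda$ with the competitor $E_t^\lambda \triangle B(y,r)$ and using Hölder's inequality with $p > N$ yields
\begin{equation*}
\lambda\,\bigl|P(E_t^\lambda) - P(E_t^\lambda \triangle B(y,r))\bigr| \;\leq\; \int_{B(y,r)} |g-t|\, dx \;\leq\; C\, r^{N(p-1)/p},
\end{equation*}
with exponent $N(p-1)/p > N-1$ and constant $C$ uniform in $t$ for $|t|$ bounded. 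This is precisely the decay required by Lemma~\ref{chapTV:density}, which then furnishes $c_0, r_0 > 0$ (depending only on $N$, $\lambda$ and $\|g\|_p$) such that whenever $|E_t^\lambda \cap B(y,r)| > 0$ for every $r > 0$,
\begin{equation*}
|E_t^\lambda \cap B(y,r)| \;\geq\; c_0\, r^N \quad \text{for all } 0 < r \leq r_0,
\end{equation*}
and the same holds with $(E_t^\lambda)^c$ in place of $E_t^\lambda$.

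Setting $M := u_\lambda^+(x_0)$, I would apply this at $x_0$ for $t$ just below $M$. By definition of the approximate supremum, $|E_t^\lambda \cap B(x_0,r)| > 0$ for every $r > 0$ whenever $t < M$, so the density estimate gives $|E_t^\lambda \cap B(x_0,r)| \geq c_0 r^N$ uniformly in $t < M$ and $r \leq r_0$. Letting $t \nearrow M$ by monotone convergence yields $|\{u_\lambda \geq M\} \cap B(x_0,r)| \geq c_0 r^N$. The local maximum property of $x_0$ provides $\rho_0 > 0$ with $|\{u_\lambda > M\} \cap B(x_0,\rho_0)| = 0$, whence
\begin{equation*}
|\{u_\lambda = M\} \cap B(x_0,r)| \;\geq\; c_0\, r^N \quad \text{for } r \leq \min(r_0,\rho_0),
\end{equation*}
which gives the desired $\liminf$ bound (with $C = c_0/\omega_N$) and yields positive measure on every ball $B(x_0,\rho)$.

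For the last assertion, assume $x_0 \notin J_{u_\lambda}$: then $u_\lambda$ is approximately continuous at $x_0$ with value $M$ and, combined with $u_\lambda \leq M$ a.e.\ near $x_0$, this forces $|\{u_\lambda \leq M-\epsilon\} \cap B(x_0,r)|/r^N \to 0$ as $r \to 0$ for every $\epsilon > 0$. Applied to the quasi-minimizer $(E_{M-\epsilon}^\lambda)^c$, the density estimate then rules out the lower bound $c_0 r^N$, so $(E_{M-\epsilon}^\lambda)^c$ must be empty in some ball $B(x_0,\rho_\epsilon)$; that is, $u_\lambda > M - \epsilon$ a.e.\ there. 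To convert this family of pointwise statements into a single neighborhood on which $u_\lambda \equiv M$, I would pass to the open representative $U$ of $\{u_\lambda \geq M\}$ furnished by Theorem~\ref{chapTV:LevelSetPb}: the density estimate applied to $U^c$, together with the approximate continuity above, excludes $x_0$ from the essential boundary of $U$, so $x_0 \in U$. Since $U$ is open, a full ball $B(x_0,\delta) \subset U$ gives $u_\lambda \geq M$ on it; combined with $u_\lambda \leq M$ near $x_0$, one concludes $u_\lambda \equiv M$, hence $Du_\lambda = 0$, on $B(x_0,\delta)$. The symmetric argument $(u_\lambda,g) \rightsquigarrow (-u_\lambda,-g)$ handles a local minimum.

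The main obstacle is this last step: upgrading a pointwise density lower bound into an actual open neighborhood of constancy requires the open-representative property and a careful application of the density estimate to $U^c$ under the non-jump hypothesis. Throughout, the assumption $p > N$ is essential, since it is precisely what makes the remainder decay $r^{N(p-1)/p}$ strictly better than $r^{N-1}$ and thereby triggers the density estimate of Lemma~\ref{chapTV:density} with constants uniform in $t$ close to $M$.
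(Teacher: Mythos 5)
Your proposal is correct and follows essentially the same route as the paper: superlevel sets $E_t^\lambda$ minimize the perimeter problem of Theorem \ref{chapTV:LevelSetPb}, the density estimate of Lemma \ref{chapTV:density} gives a uniform lower volume bound at $x_0$ for $t<u_\la^+(x_0)$, and the monotone limit $t\nearrow u_\la^+(x_0)$ combined with the local-maximum property yields the $\liminf$ bound. Your explicit derivation of the quasi-minimality remainder $Cr^{N(p-1)/p}$ (clarifying where $p>N$ enters) and your more detailed treatment of the final ``neighborhood of constancy'' step via the open representative are welcome elaborations of points the paper dispatches by citation or in a single sentence, but they do not constitute a different method.
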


To prove the theorem we shall need the following result:\\
\begin{lem}[Density estimate]\label{chapTV:density}
Consider $E$ a set of finite perimeter that is a minimizer of the perimeter problem (\ref{chapTV:LevelSetPbEq}). Then, denoting $\omega_N$ the volume of the unit ball in $\R^N$, there exists a radius $r_0>0$ 
such that for any point $x\in\O$,
\begin{itemize}
\item[-] if $\forall r>0,\ |E\cap B(x,r)|>0 \text{ then } \forall r<r_0,\ |E\cap B(x,r)|\geq \frac{\omega_{N}r^{N}}{2^N},$
\item[-] if $\forall r>0,\ |B(x,r)\setminus E |>0 \text{ then } \forall r<r_0,\ |B(x,r)\setminus E|\geq \frac{\omega_{N}r^{N}}{2^N}$.
\end{itemize}
In particular, if $x\in\partial^* E$,
\begin{align*}
\forall r<r_0,\ \min{\left(|E\cap B(x,r)|,|B(x,r)\setminus E|\right)} \geq \frac{\omega_{N}r^{N}}{2^N}.
\end{align*}
\end{lem}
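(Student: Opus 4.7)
The plan is to apply the classical cut-and-replace argument for quasi-minimizers of the perimeter. I will prove the first bullet; the second follows by applying the same reasoning to $\Omega \setminus E$, which minimizes the analogous problem with $t-g$ replaced by $g-t$. Set $m(r) := |E \cap B(x,r)|$. The function $m$ is nondecreasing, and for a.e. $r$ one has $m'(r) = \mathcal{H}^{N-1}(E^{(1)} \cap \partial B(x,r))$. Using $E \setminus B(x,r)$ as a competitor in $(\ref{chapTV:LevelSetPbEq})$, together with the slicing identity $P(E \setminus B(x,r), \Omega) = P(E, \Omega \setminus \overline{B(x,r)}) + \mathcal{H}^{N-1}(E^{(1)} \cap \partial B(x,r))$ valid for a.e. $r$, I would derive
\begin{equation*}
\lambda P(E, \overline{B(x,r)}) \leq \lambda m'(r) + \int_{E \cap B(x,r)} (g(y)-t)\,dy.
\end{equation*}

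Next I would combine this with the isoperimetric inequality (Theorem \ref{chapTV:Sobo}) applied to $E \cap B(x,r)$. Since for a.e. $r$ the perimeter of $E \cap B(x,r)$ in $\R^N$ equals $P(E, \overline{B(x,r)}) + m'(r)$, this yields
\begin{equation*}
\lambda C_N \, m(r)^{(N-1)/N} \leq 2\lambda m'(r) + \int_{E \cap B(x,r)}(g-t)\,dy.
\end{equation*}
In the setting where this lemma is applied, $g \in L^p(\Omega)$ with $p>N$, so by H\"older $\int_{E \cap B(x,r)}(g-t)\,dy \leq \|g-t\|_{L^p(B(x,r))}\, m(r)^{1-1/p}$, with $\|g-t\|_{L^p(B(x,r))}$ bounded uniformly in $x$. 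Using $m(r) \leq \omega_N r^N$ together with $(1-1/p) - (N-1)/N = 1/N - 1/p > 0$, the bulk term is at most $C r^{1-N/p} m(r)^{(N-1)/N}$, and for $r_0$ small enough (uniformly in $x$) it is absorbed into the left-hand side.

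It remains to integrate the resulting differential inequality $c \, m(r)^{(N-1)/N} \leq m'(r)$ for a.e. $r < r_0$. Since by hypothesis $m(r)>0$ for every $r>0$, $m$ does not vanish on $(0,r_0)$, so the inequality rewrites as $(m^{1/N})'(r) \geq c'$ almost everywhere. Integrating gives $m(r)^{1/N} \geq c' r$, and tuning $r_0$ and the constants yields the density bound $m(r) \geq \omega_N r^N / 2^N$. The main technical point to check carefully is the slicing identity for $P(E \setminus B(x,r), \Omega)$, which holds for a.e. $r$ by standard fine properties of $BV$ functions; the uniform absorption of the bulk term in $x$ is more elementary and relies crucially on $p>N$ so that $r^{1-N/p}$ vanishes with $r$.
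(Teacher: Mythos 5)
The paper does not actually prove this lemma: it defers to \cite[Lemma 5.1]{JalalzaiPhD} and to \cite{ChamThou,Caffarelli,ChamRegularity}, and your argument is precisely the classical De Giorgi cut-and-compare proof that those references give, so in substance it is correct and matches the intended one. Two small caveats, neither of which affects the use of the lemma in Theorem \ref{chapTV:thm_staircase_max}: the isoperimetric step applies the inequality to $E\cap B(x,r)$ in $\R^N$, which implicitly requires $B(x,r)\subset\O$ (for $x$ near $\partial\O$ one must instead invoke a relative isoperimetric inequality on the Lipschitz domain, at the cost of the constant), and the absorption of the bulk term only yields $|E\cap B(x,r)|\geq(1-\e)^N\omega_N r^N/2^N$ for $r<r_0(\e)$ rather than the clean constant $2^{-N}$ stated in the lemma --- a cosmetic discrepancy, since only a positive dimensional constant is ever used.
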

One may find a proof of this classical result in \cite[Lemma 5.1]{JalalzaiPhD} and also in \cite{ChamThou,Caffarelli,ChamRegularity}. With the previous lemma in hands we can now turn to the proof of Theorem \ref{chapTV:thm_staircase_max}:\\

\begin{proof}
Without loss of generality, we can consider a local maximum point $x_0$ of $u_\la$ that is to say there is a radius $\rho>0$ such that
\begin{align*}
u_\la\leq u_\la^{+}(x_0) \text{ on } B(x_0,\rho).
\end{align*}
Considering that $x_0\in E_t^\la=\{u_\la>t\}$ for some $t<u_\la^+(x_0)$, one has for any $r>0$,
\begin{align*}
{|E_t^\la\cap B(x_0,r)|}>0,
\end{align*}
and by the Density Lemma \ref{chapTV:density} 
\begin{align*}
\liminf_{r\to 0}\frac{|E_t^\la\cap B(x_0,r)|}{\omega_N r^N}\geq\frac{1}{2^N}.
\end{align*}
Now given that for any $r>0$,
\begin{align*}
{|\{u_\la=u_\la^+(x_0)\}\cap B(x_0,r)|}=&\inf_{\e> 0}{|E^\la_{u_\la^+(x_0)-\e}\cap B(x_0,r)|}
\end{align*}
then by the Lebesgue's density theorem,
\begin{align*}
\liminf_{r\to 0}\frac{|\{u_\la=u_\la^+(x_0)\}\cap B(x_0,r)|}{\omega_N r^N}=&
\inf_{\e>0}\liminf_{r\to 0}\frac{|E^\la_{u_\la^+(x_0)-\e}\cap B(x_0,r)|}{\omega_N r^N}\geq \frac{1}{2^N}.
\end{align*}
So there is some small $r<\rho$ such that
\begin{align*}
|\{u_\la=u_\la^+(x_0)\}\cap B(x_0,\rho)|\geq\frac{{\omega_N r^N}}{2^N}.
\end{align*}
Moreover, if we identify $\{u_\la=u_\la^+(x_0)\}$ with the set of points where it has density one, then it is an open set.
\end{proof}

Putting together these two theorems we get\\
\begin{cor}
Let $g\in L^2(\R^N)\cap L^\infty(\R^N)$ and  $u_\lambda$, $\lambda>0$ be the minimizer of (\ref{chapTV:ROF}) associated to $g$. If $u_\lambda$ is constant on at most two disjoint sets then it has no local extrema other than its global maximum and minimum.
\end{cor}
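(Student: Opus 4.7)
The plan is to combine Theorems \ref{chapTV:staircase_bound} and \ref{chapTV:thm_staircase_max} in a straightforward pigeonhole fashion: the first produces two distinguished flat zones (at the global max and min) of positive measure, the second turns any additional local extremum into a third such flat zone, which is forbidden by hypothesis.

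First I would invoke Theorem \ref{chapTV:staircase_bound} in both directions. Since $g \in L^2(\R^N) \cap L^\infty(\R^N)$ is bounded from above and from below, the minimizer $u_\lambda$ attains a global maximum $M := \max u_\lambda$ and a global minimum $m := \min u_\lambda$, and
\begin{equation*}
|\{u_\lambda = M\}| > 0, \qquad |\{u_\lambda = m\}| > 0,
\end{equation*}
with $Du_\lambda = 0$ on each. These are two disjoint sets of positive Lebesgue measure on which $u_\lambda$ is constant. If $g$ is essentially constant then $u_\lambda$ is itself constant and the statement is vacuous; otherwise Remark \ref{chapTV:rmq_stairc}$(i)$ guarantees $m < M$, so the two sets carry distinct constant values.

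Next I would proceed by contradiction. Suppose $x_0$ is a local extremum of $u_\lambda$ with value $v$ (equal to $u_\lambda^+(x_0)$ for a local maximum, or $u_\lambda^-(x_0)$ for a local minimum) and $v \notin \{m, M\}$. Since $g \in L^\infty \subset L^p$ for every $p \in (N, +\infty]$, the hypotheses of Theorem \ref{chapTV:thm_staircase_max} are met, and it yields
\begin{equation*}
|\{u_\lambda = v\} \cap B(x_0, \rho)| > 0 \quad \text{for every } \rho > 0.
\end{equation*}
Thus $\{u_\lambda = v\}$ is a set of positive Lebesgue measure on which $u_\lambda$ is constant, and it is disjoint from both $\{u_\lambda = M\}$ and $\{u_\lambda = m\}$ because $v$ is distinct from $M$ and $m$. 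This gives three pairwise disjoint sets on which $u_\lambda$ is constant, contradicting the hypothesis that there are at most two. Therefore every local extremum of $u_\lambda$ must take either the value $M$ or the value $m$, which is exactly the conclusion.

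I do not expect any serious obstacle: the argument is a bookkeeping synthesis of the two preceding theorems. The only subtlety to monitor is that a local extremum might a priori lie on the jump set $J_{u_\lambda}$, but Theorem \ref{chapTV:thm_staircase_max} still produces a flat zone at the value $u_\lambda^\pm(x_0)$ in that case — the stronger ``neighborhood'' conclusion $Du_\lambda = 0$ on $\mathcal{N}(x_0)$ is not needed here, only the positive-measure bound on $\{u_\lambda = u_\lambda^\pm(x_0)\} \cap B(x_0,\rho)$.
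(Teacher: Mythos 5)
Your argument is correct and is precisely what the paper intends: the corollary is stated with no written proof beyond the phrase ``Putting together these two theorems we get,'' and your pigeonhole combination of Theorem \ref{chapTV:staircase_bound} (flat zones of positive measure at the global maximum and minimum) with Theorem \ref{chapTV:thm_staircase_max} (a third flat zone at any other local extremum) is exactly that synthesis, with the details filled in carefully.
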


\subsection{Dependency of the staircasing on \texorpdfstring{$\lambda$}{the regularization parameter}}
\label{chapTV:stairc_evo}

In the previous section, we proved that for fixed $\lambda$ staircasing always occurs and can be quantified by (\ref{chapTV:staircase_bound}). This bound suggests that the Lebesgue measure of the created flat zones is non-decreasing with respect to $\lambda$. This was already observed for the total variation flow in $\R^N$. In \cite[Chapter 4]{Andreu} the authors even prove that the solution $u(t)$ of the total variation flow in $\R^N$ decreases in time, for some norm, with a finite extinction time. 
It is possible to get a similar result for the minimizer $u_\lambda$:
\newline
\begin{prop}\label{chapTV:prop_Bougain}
Let $\O$ be a connected bounded Lipschitz continuous open set in $\R^N$, $g\in  L^N(\O)$ and $u_\lambda$ the minimizer of (\ref{chapTV:ROF}). Then, there exists $\lambda^*=C_\O{\|g\|}_N\geq 0$ with $C_\O$ that only depends on $\O$, such that for any $\lambda\geq\lambda^*$,
\begin{align*}
u_\lambda=
\frac{1}{|\O|}\int_\O g.
\end{align*}
\end{prop}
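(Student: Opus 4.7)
The plan is to show that the constant function $\bar g := \frac{1}{|\O|}\int_\O g$ satisfies the Euler--Lagrange system (\ref{chapTV:ROF_EL}) as soon as $\la$ is sufficiently large, and then conclude by uniqueness of the ROF minimizer. Since $\bar g$ is constant, $D\bar g = 0$, so the Anzelotti pairing condition $z\cdot D\bar g = |D\bar g|$ is automatically verified by any admissible $z$. Consequently, $u_\la \equiv \bar g$ minimizes $\E_\la$ if and only if one can exhibit a vector field $z\in L^\infty(\O,\R^N)$ with
\begin{align*}
-\la \diverg z = g - \bar g \text{ in } \O, \qquad |z|\leq 1 \text{ a.e.}, \qquad z\cdot\nu = 0 \text{ on } \partial\O.
\end{align*}

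Observe that $g - \bar g$ has zero mean over $\O$ and belongs to $L^N(\O)$ since $g$ does. I would then invoke the Bourgain--Brezis $L^\infty$-divergence estimate: for a bounded Lipschitz domain $\O\subset\R^N$ with $N\geq 2$ and any $f\in L^N(\O)$ with $\int_\O f = 0$, there exists $Z\in L^\infty(\O,\R^N)$ solving $-\diverg Z = f$ in $\O$ with vanishing normal trace on $\partial\O$ and satisfying $\|Z\|_\infty \leq C_\O\|f\|_N$. Applying this to $f = g - \bar g$, and bounding $\|g-\bar g\|_N \leq 2\|g\|_N$ via Hölder, one obtains $Z$ with $\|Z\|_\infty \leq C'_\O\|g\|_N$. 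Setting $z := Z/\la$, the constraint $|z|\leq 1$ holds as soon as $\la \geq \la^* := C'_\O\|g\|_N$, and all three conditions of (\ref{chapTV:ROF_EL}) are satisfied. Strict convexity of the data term then forces $u_\la \equiv \bar g$.

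The key difficulty lies precisely in the appeal to the Bourgain--Brezis estimate, which is a delicate endpoint result. For $p>N$, the bound $\|Z\|_\infty \lesssim \|f\|_p$ follows immediately from Neumann elliptic regularity (solve $-\Delta v = f$ with zero Neumann data and use the Sobolev embedding $W^{2,p} \hookrightarrow W^{1,\infty}$), but the critical exponent $p=N$ falls out of this regime and demands the subtler Bourgain--Brezis construction. Were one to strengthen the hypothesis to $g\in L^p(\O)$ with $p>N$, the classical elliptic argument would suffice and $C_\O$ could be extracted directly from Sobolev embedding; the use of the borderline space $L^N(\O)$ in the statement is precisely what forces the sharper tool (and explains the reference to Bourgain in the proposition's name).
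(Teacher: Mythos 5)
Your proof is correct and follows essentially the same route as the paper: verify the Euler--Lagrange system (\ref{chapTV:ROF_EL}) for the constant candidate $\frac{1}{|\O|}\int_\O g$ by producing $z_\la = Z/\la$ from the Bourgain--Brezis $L^\infty$-divergence result applied to the mean-zero datum $g-\bar g\in L^N(\O)$. Your added remarks on why the critical exponent $p=N$ forces the Bourgain--Brezis tool (rather than standard elliptic regularity) are accurate and consistent with the paper's own discussion surrounding Proposition \ref{chapTV:prop_Fourier}.
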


\begin{proof}
The conclusion follows readily if one can find some $z_\la$ that satisfies the system
\begin{align*}
\begin{cases}
-\la\diverg z_\la+\frac{1}{|\O|}\int_\O g=g &\text{ in } \O,\\
{\|z_\la\|}_\infty\leq 1 &\text{ in } \O,\\
z_\la\cdot\nu_{\O}=0 &\text{ on }\partial\O.\\
\end{cases}
\end{align*}
If $p\geq N$, the function $g-\frac{1}{|\O|}\int_\O g$ is in $L^N(\O)$ and of mean zero. Therefore, the result of Bourgain and Brezis \cite{Bourgain} (see also \cite{DePauw}) asserts that there exists a $z\in L^\infty(\O,\R^N)\cap W^{1,N}(\O,\R^N)$ that solves
\begin{align*}
\begin{cases}
-\diverg z=g-\frac{1}{|\O|}\int_\O g &\text{ in } \O,\\
z\cdot\nu_{\O}=0 &\text{ on } \partial\O.\\
\end{cases}
\end{align*}
Thus a nice candidate is $z_\la=\frac{z}{\la}$, for $\la$ large enough. Actually, $\la^*={\|z\|}_\infty$ that is controlled by ${\|g\|}_N$.
\end{proof}

If the domain is not bounded, the result of Bourgain and Brezis does not apply. However, one has a similar result for a data $g$ that lies in the so-called \textit{Schwartz class} $\mathcal{S}$ that contains those functions whose derivatives are decreasing faster that any polynomial (see \cite{Hormander}):
\newline
\begin{prop}\label{chapTV:prop_Fourier}
If $g$ is an element of $\mathcal{S}(\R^N)$ and $u_\la$ denotes the minimizer of (\ref{chapTV:ROF}) corresponding to $g$ then there exists $\la^*\geq0$ such that for $\la\geq\la^*$
$$u_\la=0 \text{ in } \R^N.$$
\end{prop}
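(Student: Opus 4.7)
The plan is to apply the Euler--Lagrange characterization~(\ref{chapTV:ROF_EL}) specialized to $\O=\R^N$: proving that $u_\la=0$ is the (unique) minimizer of $\E_\la$ reduces to exhibiting a vector field $z_\la\in L^\infty(\R^N,\R^N)$ with $|z_\la|\leq 1$ a.e.\ and $-\la\,\diverg z_\la=g$ in the sense of tempered distributions. The other conditions of~(\ref{chapTV:ROF_EL}) are automatic: the pairing $z_\la\cdot Du_\la$ is trivial since $Du_\la=0$, and there is no boundary to check on $\R^N$. Once such a vector field is built for one value of $\la$, a simple rescaling $z_\la:=z/\la$ produces one for every larger $\la$, so the problem reduces to producing a single $z\in L^\infty(\R^N,\R^N)$ satisfying $-\diverg z=g$.

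The construction I have in mind is via Fourier analysis. Define $z\colon\R^N\to\R^N$ by
\begin{align*}
\hat z_j(\xi) \;=\; \frac{i\,\xi_j}{|\xi|^2}\,\hat g(\xi), \qquad j=1,\dots,N,
\end{align*}
so that $-i\xi\cdot\hat z(\xi)=\hat g(\xi)$, i.e.\ $-\diverg z=g$ as tempered distributions. The symmetry $\hat g(-\xi)=\overline{\hat g(\xi)}$ (since $g$ is real) ensures $z$ is itself real-valued.

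The crux is to show $z\in L^\infty(\R^N,\R^N)$, and for this it suffices to prove $\hat z\in L^1(\R^N,\C^N)$ and then invoke Fourier inversion, which yields $\|z_j\|_\infty\leq(2\pi)^{-N}\|\hat z_j\|_1$. Since $g\in\mathcal{S}(\R^N)$, so is $\hat g$; in particular $|\hat g(\xi)|\leq C_k(1+|\xi|)^{-k}$ for every $k$, so the tails of $\hat z_j$ decay faster than any polynomial. Near $\xi=0$ one only controls $|\hat z_j(\xi)|\leq\|\hat g\|_\infty/|\xi|$, but this singularity is locally integrable precisely because $N\geq 2$, as $\int_{|\xi|<1}|\xi|^{-1}\,d\xi$ behaves like $\int_0^1 r^{N-2}\,dr$ and is finite as soon as $N\geq 2$. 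Hence $\hat z_j\in L^1(\R^N)$, so $z\in L^\infty(\R^N,\R^N)$. Setting $\la^*:=\|\,|z|\,\|_\infty$, the field $z_\la:=z/\la$ satisfies all the required conditions for every $\la\geq\la^*$, and by strict convexity of $\E_\la$ the minimizer is $u_\la=0$.

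The main obstacle is the $L^\infty$ bound on $z$; this is exactly where the assumption $N\geq 2$ enters, mirroring the role of the Bourgain--Brezis theorem in Proposition~\ref{chapTV:prop_Bougain}. One could also imagine exhausting $\R^N$ by large Lipschitz domains $\O_R$ and passing to the limit in Proposition~\ref{chapTV:prop_Bougain}, but the constant $C_{\O_R}$ produced there is not obviously uniform in $R$, so the direct Fourier construction seems cleaner and explains transparently why the statement is specific to the Schwartz class.
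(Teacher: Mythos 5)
Your proposal is correct and follows essentially the same route as the paper: the paper also reduces to finding $z$ with $-\diverg z=g$ and $\|z\|_\infty<\infty$ by setting $z=\nabla v$ with $-\Delta v=g$, which on the Fourier side is exactly your $\hat z_j(\xi)=i\xi_j|\xi|^{-2}\hat g(\xi)$, and concludes via $\|\nabla v\|_\infty\leq\|\widehat{\nabla v}\|_1<\infty$. Your write-up is slightly more explicit about why $|\xi|^{-1}\hat g\in L^1$ (local integrability of $|\xi|^{-1}$ for $N\geq2$ plus Schwartz decay), which is a welcome clarification of the paper's terser justification.
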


\begin{proof}
As in the previous proof, the assertion follows if one can find some $z_\la$ that solves the following system
\begin{align*}
\begin{cases}
-\la\diverg z_\la=g,\\
{\|z_\la\|}_\infty\leq 1.
\end{cases}
\end{align*}
Let us look for a $z_\la$ that is of the form $z_\la=\frac{\G v}{\la}$ with $v$ that satisfies in $\R^N$
\begin{align}\label{chapTV:laplacien}
-\Delta v = g.
\end{align}
Taking the Fourier transform on both sides, one gets
\begin{align*}
-4\pi^2|\xi|^2\hat{v}(\xi)=\hat{g}(\xi),\ \forall\xi\in\R^N
\end{align*}
hence the following estimate
\begin{align*}
{\|\G v\|}_\infty\leq{\|\widehat{\G v}\|}_1=\frac{1}{4\pi^2}{\left\|{|\xi|^{-1}\hat{g}(\xi)}\right\|}_1<+\infty
\end{align*}
where in the last inequality we used the well-known fact that $\hat{g}\in\mathcal{S}\subset L^p(\R^N)$ for any $p\in[1,\infty]$ (see \cite{Hormander} for instance for further details).
Therefore $z_\la=\frac{\nabla v}{\la}$ satisfies the system above as soon as $\la\geq\la^*:= {\|\G v\|}_\infty$.
\end{proof}

For a general unbounded subdomain of $\R^N$, the previous proof cannot be adapted since it is well known that for a bounded $g$, equation (\ref{chapTV:laplacien}) does not necessarily admit a solution $v$ in $W^{2,\infty}$ (see \cite{Bourgain}).\\ 

In case $N=2$, it is possible to prove Proposition \ref{chapTV:prop_Bougain} without having to use the difficult result of Bourgain and Brezis: 
\newline
\begin{prop}
Let $\O\subset\R^2$ a be connected open set that is bounded with a Lipschitz continuous boundary, $g\in L^2(\O)$ and $u_\la$ the minimizer of (\ref{chapTV:ROF}) associated to $g$. Then there exists $\la^*={C_\O}{{\|g\|}_2}$, with $C_\O$ that only depends on $\O$, such that for $\la\geq\la^*$
$$u_\la=\frac{1}{|\O|}\int_\O g.$$
\end{prop}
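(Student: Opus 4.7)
The plan is to follow the reduction of Proposition~\ref{chapTV:prop_Bougain}. Writing $\bar g := \frac{1}{|\Omega|}\int_\Omega g$, it suffices to produce $z_\lambda \in L^\infty(\Omega;\mathbb{R}^2)$ with $\|z_\lambda\|_\infty \leq 1$ solving
\[
-\lambda\diverg z_\lambda = g - \bar g \text{ in }\Omega, \qquad z_\lambda\cdot\nu = 0 \text{ on }\partial\Omega.
\]
Then the pair $(\bar g, z_\lambda)$ satisfies the Euler--Lagrange system~\eqref{chapTV:ROF_EL} (the pairing $z_\lambda\cdot D\bar g = |D\bar g|$ is trivial as $D\bar g=0$), and strict convexity of $\mathcal{E}_\lambda$ forces $u_\lambda = \bar g$. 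Setting $f := g - \bar g \in L^2(\Omega)$ (which is of zero mean) and $z_\lambda := z/\lambda$, the proposition reduces to constructing $z \in L^\infty(\Omega;\mathbb{R}^2)$ with $-\diverg z = f$ in $\Omega$, $z\cdot\nu = 0$ on $\partial\Omega$, and $\|z\|_\infty \leq C_\Omega\|f\|_2$; one then takes $\lambda^\ast := \|z\|_\infty \leq C_\Omega\|g\|_2$.

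The first candidate is $z = \nabla v$, with $v$ the mean-zero solution of the Neumann problem $-\Delta v = f$, $\partial_\nu v = 0$. This has the right divergence and boundary trace, but not the right integrability: in dimension two, $v \in W^{2,2}(\Omega)$ sits exactly at the critical Sobolev threshold, so $\nabla v$ lies only in $W^{1,2} \hookrightarrow BMO$ and not in $L^\infty$. The remedy, which is specific to $N=2$ and avoids the deep result of \cite{Bourgain}, is to add a divergence-free correction $\nabla^\perp h$ to $\nabla v$ that absorbs the unbounded part while preserving both the divergence and the Neumann condition (the latter holds whenever $h$ is constant on each connected component of $\partial\Omega$). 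In the plane, such a correction can be written down by complex-variable / harmonic-conjugate techniques: reduce by a bi-Lipschitz map to a model domain (a disk, or a disk with holes in the multiply connected case), construct the correction explicitly there via the Cauchy kernel, and pull back. The sum $z := \nabla v + \nabla^\perp h$ then has $-\diverg z = f$, $z\cdot\nu = 0$, and $\|z\|_\infty \leq C_\Omega\|f\|_2$.

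The main obstacle is precisely this borderline Sobolev-to-$L^\infty$ upgrade: no classical Calder\'on--Zygmund estimate on the operator $f\mapsto\nabla(-\Delta)^{-1}f$ can close the gap coming from the logarithmic singularity of the planar Green kernel, so neither elliptic regularity nor singular-integral theory alone is enough. The crux of the argument lies in writing the divergence-free correction $h$ down in closed form with the quantitative estimate $\|\nabla^\perp h\|_\infty \leq C_\Omega\|f\|_2$, a construction available in 2D via conformal conjugation but not in higher dimensions. Once this $z$ is in hand, the remainder of the proof is a direct verification: for $\lambda \geq \lambda^\ast = \|z\|_\infty$ the constraint $\|z_\lambda\|_\infty \leq 1$ holds for $z_\lambda = z/\lambda$, the system~\eqref{chapTV:ROF_EL} is satisfied by $(\bar g, z_\lambda)$, and uniqueness of the minimizer concludes $u_\lambda = \bar g$.
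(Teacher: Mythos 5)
Your reduction to finding $z\in L^\infty(\O;\R^2)$ with $-\diverg z = g-\bar g$ in $\O$, $z\cdot\nu=0$ on $\partial\O$ and $\|z\|_\infty\leq C_\O\|g\|_2$ is correct in principle, but the step you yourself call ``the crux'' is precisely where the argument breaks down, and it cannot be repaired along the lines you sketch. For zero-mean $f\in L^2(\O)$ with $\O\subset\R^2$, producing a \emph{bounded} vector field with divergence $f$ is exactly the critical-exponent problem of Bourgain and Brezis ($f\in L^N$ in dimension $N$), and one of the main points of their work is a negative result: there is no bounded \emph{linear} solution operator $f\mapsto z$ from $L^N$ into $L^\infty$. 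Your proposed correction --- harmonic conjugates, the Cauchy kernel, conformal conjugation to a disk --- is a linear, explicit-kernel construction, so it is ruled out by that very obstruction; the conjugate of a $BMO$ function is again only $BMO$, and no closed-form $\nabla^\perp h$ with the quantitative bound $\|\nabla^\perp h\|_\infty\leq C_\O\|f\|_2$ is actually produced. In effect you have restated the hard theorem that this proposition is explicitly designed to avoid (it is invoked as a black box only in Proposition \ref{chapTV:prop_Bougain}, for general $N$) and left its proof as the missing step.

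The paper's own proof of the two-dimensional case is entirely different and elementary. It uses the characterization of $p=-\diverg(z)\in\partial TV(u)$ from \cite{Andreu}, tested with $u=u_\la$, $p=\frac{1}{\la}(g-u_\la)$ and the constant function $\vphi=\langle u_\la\rangle$, which gives $\int_\O|Du_\la|\leq\frac{1}{\la}\int_\O(g-u_\la)(u_\la-\langle u_\la\rangle)$. The left-hand side is bounded below by $C\|u_\la-\langle u_\la\rangle\|_2$ via the $BV$ Poincar\'e inequality --- and it is exactly in dimension $N=2$ that the Poincar\'e exponent $N/(N-1)$ equals $2$, matching the $L^2$ fidelity term --- while the right-hand side is bounded above by $\frac{1}{\la}\|u_\la-\langle g\rangle\|_2\,\|u_\la-g\|_2$ using Cauchy--Schwarz and $\langle u_\la\rangle=\langle g\rangle$. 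Since $\|u_\la-g\|_2\leq\|g\|_2$ by minimality, either $u_\la=\langle g\rangle$ or $\la\leq\|g\|_2/C$, which is the claim with $\la^*=\|g\|_2/C$. To salvage your route you would have to invoke Bourgain--Brezis outright, which collapses this proposition into the previous one; otherwise you should switch to the duality/Poincar\'e argument above.
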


\begin{proof}
Given $u\in\BV\cap L^2(\O)$, in \cite[Lemma 2.4]{Andreu}, one can find the following characterization of $p=-\diverg(z)\in\partial TV(u)$: 
\begin{align*}
\int_\O|Du|\leq\int_{\O}(u-\vphi)p+\int_\O z\cdot\G\vphi,\ \forall \vphi\in W^{1,1}(\O)\cap L^2(\O).
\end{align*}
Therefore, denoting $\langle u_\la\rangle$ the average of $u_\la$ and setting $u=u_\la$, $\vphi=\langle u_\la \rangle$ and $p=\frac{1}{\la}(g-u_\la)$ one has
\begin{align*}
\int_\O|Du_\la|\leq\frac{1}{\la}\int_\O(g-u_\la)\left(u_\la-\langle u_\la\rangle\right).
\end{align*}
Then, applying the Poincaré inequality (see Theorem \ref{chapTV:Sobo}) and Cauchy-Schwarz and using $\langle u_\la\rangle=\langle g\rangle$, the average of $g$, one obtains the estimate
\begin{align*}
C{\|u_\la-\langle g\rangle\|}_2\leq\frac{1}{\la}{\|u_\la-\langle g\rangle\|}_2{\|u_\la-g\|}_2,
\end{align*}
where $C$ is the constant that appears in the Poincaré inequality.
Now remarking that, by minimality of $u_\la$,
\begin{align*}
{\|u_\la- g\|}_2\leq{\|g\|}_2
\end{align*}
concludes the proof. Moreover, we get $\la^*=\frac{{\|g\|}_2}{C}$.
\end{proof}


By the Sobolev inequality (see Theorem \ref{chapTV:Sobo}), for which the optimal constant is known, it is readily checked that we also gave an alternative proof for Proposition \ref{chapTV:prop_Fourier} in case $N=2$:
\newline
\begin{prop}
Let $g\in L^2(\R^2)$ and $u_\la$ be the minimizer of (\ref{chapTV:ROF}) associated to $g$ then there exists $\la^*={\left({2\pi^{\frac{1}{2}}}\right)}^{-1}{\|g\|}_2$ such that for $\la\geq\la^*$
$$u_\la=0 \text{ in } \R^N.$$
\end{prop}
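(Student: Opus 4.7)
The plan is to mirror the strategy of the preceding proposition (bounded-domain case), replacing the Poincar\'e inequality by the Sobolev inequality on $\R^2$ from Theorem \ref{chapTV:Sobo} and using the constant test function $\vphi = 0$ in place of the mean $\langle u_\la \rangle$ (which has no analog on all of $\R^2$). Since $u_\la \in L^2(\R^2)$, the choice $\vphi = 0$ is trivially admissible as an element of $W^{1,1}(\R^2) \cap L^2(\R^2)$.

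First I would read off from the Euler-Lagrange equation that $p := \frac{1}{\la}(g - u_\la)$ lies in $\partial TV(u_\la)$, and then apply the subdifferential characterization of \cite[Lemma 2.4]{Andreu} (which extends to $\R^N$ with no change) with $u = u_\la$ and $\vphi = 0$ to obtain
\begin{align*}
\int_{\R^2} |Du_\la| \le \frac{1}{\la} \int_{\R^2} u_\la (g - u_\la)\, dx.
\end{align*}
Cauchy-Schwarz bounds the right-hand side by $\frac{1}{\la}\|u_\la\|_2\,\|u_\la - g\|_2$. The Sobolev inequality with the sharp constant $(2\sqrt{\pi})^{-1}$ (which follows from the coarea formula together with the planar isoperimetric inequality, saturated by the disk) then gives $\|u_\la\|_2 \le (2\sqrt{\pi})^{-1} \int |Du_\la|$. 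Finally, comparing $\E_\la(u_\la)$ to $\E_\la(0)$ yields the minimality bound $\|u_\la - g\|_2 \le \|g\|_2$.

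Combining these three ingredients produces
\begin{align*}
\int_{\R^2} |Du_\la| \le \frac{\|g\|_2}{2\sqrt{\pi}\,\la}\, \int_{\R^2} |Du_\la|,
\end{align*}
so as soon as $\la \ge \la^* := \|g\|_2/(2\sqrt{\pi})$ one must have $\int |Du_\la| = 0$. Hence $u_\la$ is constant on $\R^2$, and the only $L^2$ constant is zero, giving $u_\la \equiv 0$. I do not foresee any serious obstacle: the only mildly delicate point is checking that the subdifferential characterization of \cite{Andreu} (originally formulated on a bounded domain) remains valid on $\R^N$, but this is harmless here because the choice $\vphi = 0$ requires no boundary manipulation and the Anzelotti pairing extends to $\R^N$ without modification.
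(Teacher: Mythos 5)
Your proof is correct and is precisely the argument the paper has in mind: its own ``proof'' is the one-line remark that the preceding bounded-domain proposition's argument goes through on $\R^2$ once the Poincar\'e inequality is replaced by the sharp Sobolev inequality $\|u\|_2\leq{(2\sqrt{\pi})}^{-1}\int|Du|$, and your choice $\vphi=0$ is the natural substitute for $\langle u_\la\rangle$. The only point left implicit (in the paper as well) is the endpoint $\la=\la^*$, where the chain of inequalities degenerates to an equality and one needs an extra word (e.g.\ equality in $\|u_\la-g\|_2\leq\|g\|_2$ combined with $\E_\la(u_\la)\leq\E_\la(0)$ forces $\la\int|Du_\la|\leq 0$).
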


\begin{rmq}
Reasoning as we did we get an explicit $\la^*$ that is optimal as can be seen for $g=\chi_{\mathbb{D}}$ the characteristic of the unit disc since we know in this case that
\begin{align*}
u_\la=(1-2\la)^+\chi_\D.
\end{align*}
\end{rmq}

Now that we got rid of the case when $\lambda$ is large, let us see through some examples how the staircase regions behave for reasonable values of the regularization parameter. In the rest of this section, we assume that $g$ is the characteristic function of a set $C$ which means we are now interested in the minimizers of
\begin{align*}
\E_\lambda(u)=\lambda\int_\O|Du|+\frac{1}{2}{\|u-\chi_C\|}^2_2.
\end{align*}
We are going to distinguish two different cases for $C$:

\subsubsection{The characteristic of a bounded convex set in $\R^2$} Then it is well known (see \cite{Allard08,Alter})
that for $\la\leq\la^*$ and for any $t\geq0$ the superlevel $E_t^\la=\{u_\lambda>t\}$ of the solution $u_\lambda$ is given by
\begin{align*}
E_t^\la=
\begin{cases}
C_{\lambda/(1-t)} \text{ if }t\leq 1-\lambda/R^*,\\
\emptyset\text{ otherwise.}
\end{cases}
\end{align*}
where for any $R>0$, $C_R$ is the opening of $C$ defined by
\begin{align*}
C_R=\bigcup_{B(x,R)\subset C} B(x,R),
\end{align*}
and $R^*$ is the inverse of the so-called \textit{Cheeger constant} defined by the value of $R$ that solves
\begin{align*}
\frac{P(C_R)}{|C_R|}=\frac{1}{R}.
\end{align*}
Therefore, the staircase set 
\begin{align*}
\{u_\lambda=\max u_\la\}=C_{R^*}
\end{align*}
is the so-called \textit{Cheeger set} and is independent of $\lambda\leq\la^*$. We would get similar results if $C$ were the union of spaced convex sets (see \cite{Andreu} for the expression of $u_\lambda$ in this case).

\subsubsection{The characteristic of two touching squares in $\R^2$} Here $C=[0,1]\times[0,-1]\cup[-1,0]\times[0,1]$ is the union of two unit squares that only touch on a vertex.
In \cite{Allard09}, Allard gives a full description of the superlevels of the solution $u_\lambda$, hence $u_\lambda$ itself. The level sets $E_t^\la=\{u_\la>t\}$ are of five kinds namely
\begin{align*}
E_t^\la\in\left\{\emptyset,\ \R^2,\ F_{r,s},\ G_{r,s},\ H_{r}\ /\ \la=\frac{1}{r}+\frac{1}{s},r,s\in\R^+\right\}
\end{align*}
where the last three sets are formally defined in \cite{Allard09}. They are depicted in the following figures as the interior of the domain bounded by the red curve:
\begin{figure}[H]
\label{chapTV:levels_squares}
\centering
   \begin{minipage}[c]{.32\linewidth}
\input{./Allard1_vect.txt}
    \caption
{$F_{r,s}$.}
\end{minipage}
   \begin{minipage}[c]{.32\linewidth}
\input{./Allard2_vect.txt}
    \caption
{$G_{r,s}$.}
\end{minipage}
   \begin{minipage}[c]{.32\linewidth}
\input{./Allard3_vect.txt}
    \caption
{$H_{r}$.}
\end{minipage}
\end{figure}

\noindent In the following figure Allard summed up the different possibilities:
\begin{figure}[H]
\label{chapTV:stack_levels}
   \begin{minipage}[c]{.46\linewidth}
\input{./Allard4_vect.txt}
    \caption
{Level sets $E_t^\la$.}
  \end{minipage}
\hfill
  \begin{minipage}[c]{.46\linewidth}
     \includegraphics[width=.76\hsize]{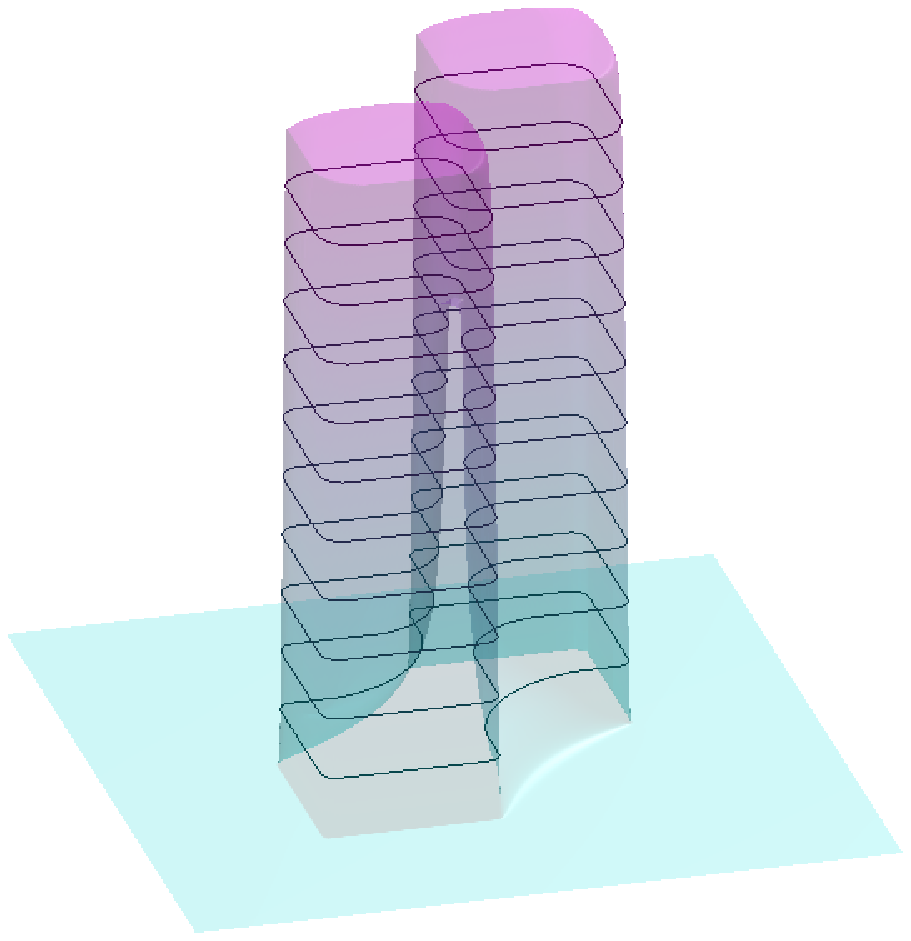}
    \caption
{$u_\la$ and its level lines.}
  \end{minipage} 
\end{figure}


Now consider that $\la\in(0,\la^*)$. As seen previously, the staircase set \linebreak $S_\la:=\{u_{\la}=\max u_\la\}$ is exactly the smallest superlevel set that is not empty this is to say
\begin{align*}
S_\la\in\{F_{q(s),s},\ G_{q(s),s},\ H_{q(s)}\}
\end{align*}
where the function $q(s)$ is non-decreasing in $s$. 
Let us focus on two values of the regularization parameter: $\la_1=\frac{7}{32}$ and $\la_2=\frac{11}{64}$ and let $s_i,\ i\in\{1,2\}$ be the unique value such that $\la_i=\frac{1}{s_i}+\frac{1}{q(s_i)}$. Then it is readily seen that
\begin{align*}
S_{\la_2}=G_{q(s_2),s_2}\not\subset F_{q(s_1),s_1}=S_{\la_1}
\end{align*}
even though $\la_2<\la_1$.

This example shows that in general the staircase zones $(S_\la)_{\la\geq0}$ do not form a monotone sequence.\\

Such a phenomenon cannot occur given a radial function $g$ and this is what we are going to prove in the following section. In short, this is due to the fact that the solutions ${(u_\la)}_\la$ of the radial problem form a semi-group. This was already established in the one-dimensional setting \cite{Briani} and in case $g=\chi_C$ the characteristic function of a convex set $C$ \cite{Andreu,Alter}.

\section{Denoising problem for radial data}
\label{chapTV:denoising_radial}
Unless otherwise stated, in this section $\O$ will denote the ball $B(0,R)\subset\R^N$ with $N\geq2$ and we consider a radial $g\in L^2(\O)$. It is easily seen that the minimizer $u_\la$ of (\ref{chapTV:ROF}) is itself radial. Indeed one could argue that for any rotation $\mathcal{R}$, $u_\la(\mathcal{R}x)$ is also a minimizer. Since it is unique $u_\la(\mathcal{R}x)=u_\la(x)$ for any $x\in\R^N$. We denote $\tilde{v}$ the function defined by $\tilde{v}(|x|)=v(x)$ for any $x\in \R^N$. 
Then ${u}_\la$ minimizes
\begin{align}\label{chapTV:ROF_rad}
\min_{u\in\BV} \int_0^R\left(\la|\tilde{u}'(r)|+\frac{1}{2}{(\tilde{u}(r)-\tilde{g}(r))}^2\right)r^{N-1}dr.
\end{align}
Thus, proceeding as in the one-dimensional case, either ${u}_\la$ is constant or
\begin{align*}
{u}_\la(x)=g(x)+\sgn{(\tilde{u}_\la'\big(|x|)\big)}\frac{(N-1)\la}{|x|}
\end{align*}
for any $x\in\R^N\setminus\{0\}$.
Now, introducing the dual variable $\ti{z}$
and reasoning as in \cite{ChamAlgo}, we can derive a dual formulation for the minimization problem (\ref{chapTV:ROF_rad}), that is
\begin{align*}
\inf_{\substack{\ti{z}\in W^{1,2}(0,R),\\
 \ti{z}(R)=0,\ |\ti{z}|\leq\la}} \int_0^R \left(\ti{z}'(r)+\frac{N-1}{r}\tilde{z}(r)+g(r)\right)^2 r^{N-1}dr.
\end{align*}
Then, if we set $z(x)=\tilde{z}(x)\frac{x}{|x|}$ for any $x\in \R^N\setminus\{0\}$,
\begin{align*}
(\diverg(z))(x)=\G(\tilde{z}(|x|))\cdot \frac{x}{|x|}+\tilde{z}(|x|)\diverg\left(\frac{x}{|x|}\right)=\tilde{z}'(|x|)+\tilde{z}(|x|)\frac{N-1}{|x|}
\end{align*}
which gives the dual formulation 
\begin{align}\label{chapTV:ROF_rad_dual}
\inf_{z\in{A_\la}}\E(z):=\int_{B(0,R)} {\left(\diverg(z)+g\right)}^2,
\end{align}
where
\begin{align*}
A_\la=\left\{z(x)=\tilde{z}(x)\frac{x}{|x|} \ /\ \forall x\in\R^N\setminus\{0\}, \ti{z}\in W^{1,2}(0,R),\ \ti{z}(R)=0,\ |\ti{z}|\leq\la\right\}
\end{align*}
is the set of admissible $z$. Henceforth, we denote $z_\la$ a minimizer of problem (\ref{chapTV:ROF_rad_dual}). Since it is radial, we will sometimes write $z_\la$ instead of $\ti{z}_\la$.  Note also that $z_\la$ is actually continuous by the Sobolev embedding theorem.\\

\begin{rmq}
We decided to proceed this way to justify rigorously that one can pick a radial vectorfield $z_\la$ in the Euler-Lagrange equation (\ref{chapTV:ROF_EL}).
\end{rmq}

\subsection{The solutions form a semi-group}
Let us start with the following lemma:
\begin{lem}\label{lem_semi-grp}
Let $g\in L^p(\O)$ with $p\in [N,+\infty]$ then for any $\la>0$, 
\begin{align*}
|z_\la(x)|\leq C|x|^{1-\frac{N}{p}}{\|g-u_\la\|}_{L^p(B(0,|x|))}
\end{align*}
for some positive $C$ hence in particular $z_\la(0)=0$.
\end{lem}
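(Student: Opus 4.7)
The plan is to translate the radial dual formulation (\ref{chapTV:ROF_rad_dual}) into an explicit ODE for $\tilde z_\la$, solve it by quadrature using the correct integrating factor, and then apply Hölder's inequality on the resulting integral representation.

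The first step is to extract from the optimality conditions for the dual problem the primal-dual identity $u_\la = g + \diverg z_\la$ a.e. in $\O$ (the standard relation coming from minimizing the quadratic cost of the dual against the $L^2$ fidelity of the primal). Using the formula for the radial divergence already computed in the excerpt, this identity reads, for a.e. $r \in (0,R)$,
\begin{equation*}
\tilde z_\la'(r) + \frac{N-1}{r}\,\tilde z_\la(r) \;=\; \tilde u_\la(r) - \tilde g(r).
\end{equation*}
Multiplying by the integrating factor $r^{N-1}$ this becomes
\begin{equation*}
\frac{d}{dr}\bigl(r^{N-1}\tilde z_\la(r)\bigr) \;=\; r^{N-1}\bigl(\tilde u_\la(r) - \tilde g(r)\bigr).
\end{equation*}

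Next, I integrate on $(0,\rho)$ with $\rho = |x|$. Here the crucial point is that the boundary term at $0$ vanishes: by the very definition of $A_\la$ one has $|\tilde z_\la| \le \la$, and since $N \ge 2$ this forces $r^{N-1}\tilde z_\la(r) \to 0$ as $r \to 0^+$. Rewriting the integral in Cartesian form via the co-area slicing $\int_{B(0,\rho)} f = N\omega_N \int_0^\rho s^{N-1}\tilde f(s)\,ds$, I obtain
\begin{equation*}
\rho^{N-1}\tilde z_\la(\rho) \;=\; \frac{1}{N\omega_N}\int_{B(0,\rho)}\bigl(u_\la(y) - g(y)\bigr)\,dy.
\end{equation*}

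Then Hölder's inequality with conjugate exponents $p$ and $p/(p-1)$ gives
\begin{equation*}
\Bigl|\int_{B(0,\rho)}(u_\la - g)\Bigr| \;\le\; \|u_\la-g\|_{L^p(B(0,\rho))}\,|B(0,\rho)|^{1-\frac{1}{p}} \;=\; \omega_N^{1-\frac{1}{p}}\,\rho^{\,N-\frac{N}{p}}\,\|u_\la-g\|_{L^p(B(0,\rho))}.
\end{equation*}
Dividing through by $\rho^{N-1}$ collects the powers to $\rho^{1-N/p}$ and yields the stated estimate with $C=\omega_N^{-1/p}/N$. Finally, for $z_\la(0)=0$: when $p<\infty$, absolute continuity of the $L^p$ norm makes $\|u_\la-g\|_{L^p(B(0,\rho))}\to 0$ as $\rho\to 0$ (noting $u_\la\in L^p$ whenever $g\in L^p$, by the usual truncation/comparison argument for ROF), and the exponent $1-N/p\ge 0$ is harmless; when $p=\infty$, the bound degenerates to $|z_\la(x)|\le C|x|\,\|u_\la-g\|_\infty$, which also vanishes at the origin.

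The only step that requires genuine care is the justification of the vanishing of the boundary term at $r=0$, and more generally the rigorous derivation of the pointwise a.e.\ identity $\tilde z_\la' + \frac{N-1}{r}\tilde z_\la = \tilde u_\la - \tilde g$ from the duality (in particular that $\tilde z_\la$, being a $W^{1,2}(0,R)$ minimizer with the $L^\infty$ constraint active only on a closed set, actually satisfies the Euler--Lagrange equation in the form above). Once this is granted, the rest is a routine ODE computation followed by Hölder.
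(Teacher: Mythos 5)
Your proposal is correct and follows essentially the same route as the paper: the radial identity $\diverg z_\la=\frac{1}{r^{N-1}}(r^{N-1}\tilde z_\la)'$, integration from $0$ to $|x|$ using $\diverg z_\la=u_\la-g$, and H\"older's inequality on $B(0,|x|)$. You merely supply a few details the paper leaves implicit (the vanishing of the boundary term at $r=0$ via $|\tilde z_\la|\le\la$ and $N\ge2$, the explicit constant, and the case analysis for $z_\la(0)=0$), which is fine.
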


\begin{proof}
For a.e. $r=|x|$ with $x\in\R^N$
\begin{align*}
\diverg(z_\la)(x)=\ti{z}_\la'(r)+\frac{N-1}{r}\ti{z}_\la(r)=\frac{1}{r^{N-1}}(\ti{z}_\la(r)r^{N-1})'.
\end{align*}
Hence integrating with respect to $r$ and using $\diverg(z_\la)\leq |g-u_\la|$, it follows
\begin{align*}
\ti{z}_\la(r)r^{N-1}\leq \int_{B(0,r)}|g-u_\la|\leq C {\|g-u_\la\|}_{L^p(B(0,r))}r^{N\left(1-\frac{1}{p}\right)}
\end{align*}
for some positive real $C$.
\end{proof}

The following proposition is the key result in the study of the radial problem:
\begin{prop}[Comparison result]\label{chapTV:comp_pp}
Let $g\in L^2(\O)$, $\mu>\la\geq0$ and consider respectively $z_\la$ and $z_\mu$ the corresponding minimizers of $\E$ then one has
\begin{align*}
z_\la-\la\geq z_\mu-\mu.
\end{align*}
\end{prop}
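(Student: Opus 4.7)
The plan is to argue by contradiction: assume the open set $A := \{r \in (0, R) : \tilde z_\mu(r) - \tilde z_\la(r) > \mu - \la\}$ has positive Lebesgue measure, and combine a min/max perturbation of the dual problem with an integration by parts based on the radial Euler--Lagrange ODE, so as to isolate the sign of $\Phi := \tilde z_\mu - \tilde z_\la - (\mu - \la)$ on $A$ and derive a contradiction.

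First, I would build the admissible test fields
$$\tilde v := \max\bigl(\tilde z_\la,\, \tilde z_\mu - (\mu-\la)\bigr), \qquad \tilde w := \min\bigl(\tilde z_\mu,\, \tilde z_\la + (\mu-\la)\bigr),$$
together with their radial vector fields $v, w$. The hypothesis $\mu \ge \la$ is used crucially to ensure $v \in A_\la$ and $w \in A_\mu$, while the pointwise identity $\tilde v + \tilde w = \tilde z_\la + \tilde z_\mu$ gives $\diverg v + \diverg w = \diverg z_\la + \diverg z_\mu$. Setting $F := \diverg v + g$ and $G := \diverg w + g$ so that $F + G = \tilde u_\la + \tilde u_\mu$, the identity $a^2 + b^2 = \tfrac12[(a+b)^2 + (a-b)^2]$ combined with the minimality $\E(z_\la) + \E(z_\mu) \le \E(v) + \E(w)$ reduces matters to a comparison of $(F - G)^2$ against $(\tilde u_\la - \tilde u_\mu)^2$. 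Since on $A$ one computes $F - G = (\tilde u_\mu - \tilde u_\la) - \frac{2(N-1)(\mu - \la)}{r}$ while on $A^c$ these differ only by sign, a direct algebraic simplification produces
$$\int_A (\tilde u_\mu - \tilde u_\la)\, r^{N-2}\, dr \;\le\; (N-1)(\mu - \la) \int_A r^{N-3}\, dr. \qquad (\star)$$

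Second, I would exploit the radial Euler--Lagrange equations $\tilde z_\la' + \frac{N-1}{r}\tilde z_\la + \tilde g = \tilde u_\la$ and its $\mu$-analogue. Subtracting and rewriting in terms of $\Phi$ yields the linear ODE
$$\Phi' + \frac{N-1}{r}\Phi + \frac{(N-1)(\mu - \la)}{r} = \tilde u_\mu - \tilde u_\la.$$
By Lemma \ref{lem_semi-grp} and the boundary condition $\tilde z_\la(R) = \tilde z_\mu(R) = 0$, the continuous function $\Phi$ satisfies $\Phi(0) = \Phi(R) = -(\mu - \la) < 0$, so $A = \{\Phi > 0\}$ is a countable union of open intervals $(a_i, b_i) \subset (0, R)$ on which $\Phi$ vanishes at each endpoint. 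Multiplying the ODE by $r^{N-2}$ and integrating by parts on each component kills the boundary terms, giving
$$\int_A (\tilde u_\mu - \tilde u_\la)\, r^{N-2}\, dr = \int_A \Phi\, r^{N-3}\, dr + (N-1)(\mu - \la) \int_A r^{N-3}\, dr.$$
Comparing with $(\star)$ forces $\int_A \Phi\, r^{N-3}\, dr \le 0$, which is impossible as soon as $|A| > 0$, since $\Phi > 0$ on $A$ and $r^{N-3} > 0$. Hence $|A| = 0$, and the comparison $\tilde z_\mu - \tilde z_\la \le \mu - \la$ extends from a.e.\ to everywhere by continuity.

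The main obstacle, and what makes the argument nontrivial, is the precise matching of coefficients: the $\frac{(N-1)(\mu - \la)}{r}$ term appearing on the right of $(\star)$ must exactly cancel the corresponding source term produced by the ODE integration, so that only the pointwise sign of $\Phi$ on $A$ remains available to deliver the contradiction.
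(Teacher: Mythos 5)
Your proposal is correct and follows essentially the same route as the paper: the competitors $\bigl(z_\mu+(\la-\mu)\tfrac{x}{|x|}\bigr)\vee z_\la$ and $\bigl(z_\la+(\mu-\la)\tfrac{x}{|x|}\bigr)\wedge z_\mu$ are exactly your $\tilde v,\tilde w$, your inequality $(\star)$ is the paper's (4.4) written in radial coordinates, and your integration by parts of the ODE against $r^{N-2}$ over the components of $A$ is the paper's integration of $\tfrac{N-1}{|x|}\diverg(\cdot)$ over $\{z_\la-z_\mu+\mu-\la<0\}$; only the bookkeeping (parallelogram identity, decomposition of $A$ into intervals) differs. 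The one point to fix: you invoke Lemma \ref{lem_semi-grp} to get $\Phi(0)=-(\mu-\la)$, but that lemma needs $g\in L^p$ with $p\geq N$, whereas the proposition assumes only $g\in L^2$; the paper therefore splits cases, using the lemma only when $N=2$ (where $p=N=2$ is available) and, for $N\geq3$, disposing of the origin via the bound $|\Phi|\leq 2\mu$ so that the boundary term $r^{N-2}\Phi(r)\to0$ as $r\to0$ --- your component-wise integration accommodates this same fix with no other change.
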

\begin{proof}
On the one hand,
\begin{align*}
\E\left(\Big({z}_\mu+(\la-\mu)\frac{x}{|x|}\Big)\vee {z}_\la\right)=&\int_{\{z_\la>z_\mu+\la-\mu\}}{\left(\diverg{(z_\la)}+g\right)}^2dx\\
+&\int_{\{z_\la<z_\mu+\la-\mu\}}{\left(\diverg\Big(z_\mu+(\la-\mu)\frac{x}{|x|}\Big)+g\right)}^2dx,\\
\E\left(\Big({z}_\la+(\mu-\la)\frac{x}{|x|}\Big)\wedge {z}_\mu\right)=&\int_{\{z_\la>z_\mu+\la-\mu\}}{\left(\diverg(z_\mu)+g\right)}^2dx\\
+&\int_{\{z_\la<z_\mu+\la-\mu\}}{\left(\diverg\Big(z_\la+(\mu-\la)\frac{x}{|x|}\Big)+g\right)}^2dx,
\end{align*}
thus by minimality of $z_\mu$ and $z_\la$
\begin{align*}
\int_{\{z_\la<z_\mu+\la-\mu\}}\left({(\diverg(z_\la)+g)}^2+{(\diverg(z_\mu)+g)}^2\right)dx\\
\leq\int_{\{z_\la<z_\mu+\la-\mu\}}{\left(\diverg\Big(z_\mu+(\la-\mu)\frac{x}{|x|}\Big)+g\right)}^2dx\\
+\int_{\{z_\la<z_\mu+\la-\mu\}}{\left(\diverg\Big(z_\la+(\mu-\la)\frac{x}{|x|}\Big)+g\right)}^2dx,
\end{align*}
and then it follows that
\begin{align}\label{chapTV:rad_N}
\int_{\{z_\la-z_\mu+\mu-\la<0\}} (\mu-\la)\frac{N-1}{|x|}\left(\diverg\Big(z_\la-z_\mu+({\mu-\la})\frac{x}{|x|}\Big)\right)dx\geq0.
\end{align}
But on the other hand, given that $z_\la-z_\mu+\mu-\la>0$ on $\partial\O$ and $z_\la-z_\mu+\mu-\la=0$ on $\partial\{z_\la-z_\mu+\mu-\la<0\}$, one gets by integration for $\e>0$, 
\begin{align*}
\int_{[\O\setminus B(0,\e)]\cap\{z_\la-z_\mu+\mu-\la<0\}} \frac{N-1}{|x|}\left(\diverg\Big(z_\la-z_\mu+({\mu-\la})\frac{x}{|x|}\Big)\right)dx\\
=-\frac{N-1}{\e}\int_{\partial B(0,\e)\cap\{z_\la-z_\mu+\mu-\la<0\}}(z_\la-z_\mu+\mu-\la)(x)dx\\
+\int_{ [\O\setminus B(0,\e)]\cap\{z_\la-z_\mu+\mu-\la<0\}}\frac{N-1}{{|x|}^2}(z_\la-z_\mu+\mu-\la)(x)dx.
\end{align*}
Now one has to distinguish two cases:\\

If $N\geq 3$, then, given that $C:={\|z_\la-z_\mu+\mu-\la\|}_\infty<+\infty$, one has 
\begin{align*}
0\leq-\frac{N-1}{\e}\int_{\partial B(0,\e)\cap\{z_\la-z_\mu+\mu-\la<0\}}(z_\la-z_\mu+\mu-\la)(x)dx\leq 2\pi C\e^{N-2}.
\end{align*}
Thus sending $\e\to0$ and considering identity $(\ref{chapTV:rad_N})$ we obtain
\begin{align*}
\int_{\{z_\la-z_\mu+\mu-\la<0\}}\frac{N-1}{{|x|}^2}(z_\la-z_\mu+\mu-\la)(x)dx \geq 0
\end{align*}
hence
\begin{align*}
z_\la-\la\geq z_\mu-\mu.
\end{align*}
 
If $N=2$ and $\e$ is small then, by Lemma \ref{lem_semi-grp}, 
\begin{align*}
{\partial B(0,\e)\cap\{z_\la-z_\mu+\mu-\la<0\}}=\emptyset
\end{align*}
thus
\begin{align*}
-\frac{N-1}{\e}\int_{\partial B(0,\e)\cap\{z_\la-z_\mu+\mu-\la<0\}}(z_\la-z_\mu+\mu-\la)(x)dx=0
\end{align*}
and the conclusion follows in the same way as above.
\end{proof}

\begin{cor}
For any $\la,\mu\geq0$, one has
\begin{align*}
{\|z_\la-z_\mu\|}_\infty\leq|\la-\mu|.
\end{align*}
\end{cor}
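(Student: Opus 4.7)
The plan is to reduce the corollary to Proposition \ref{chapTV:comp_pp} by combining it with an elementary sign-flip symmetry of the dual problem. The desired inequality $\|z_\la - z_\mu\|_\infty \le |\la - \mu|$ is symmetric in $\la$ and $\mu$, so I may assume $\mu > \la \ge 0$, and the claim amounts to the two-sided pointwise bound
$$-(\mu - \la) \le z_\la(x) - z_\mu(x) \le \mu - \la \quad \text{for a.e. } x \in B(0,R).$$

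The lower bound is immediate from Proposition \ref{chapTV:comp_pp}: the conclusion $z_\la - \la \ge z_\mu - \mu$ rearranges directly to $z_\la - z_\mu \ge \la - \mu = -(\mu - \la)$.

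For the upper bound I will use that the dual functional $\E$ and the admissible set $A_\la$ behave well under the simultaneous change $z \mapsto -z$, $g \mapsto -g$. The class $A_\la$ is clearly stable under $z \mapsto -z$, and since $\diverg(-z) = -\diverg(z)$ one has
$$\int_{B(0,R)} \bigl(\diverg(-z) + g\bigr)^2 dx = \int_{B(0,R)} \bigl(\diverg(z) + (-g)\bigr)^2 dx.$$
A one-line check then shows that if $z_\la$ is the minimizer of the dual problem for datum $g$, then $-z_\la$ is the minimizer of the analogous problem for datum $-g$, and similarly for $z_\mu$. Applying Proposition \ref{chapTV:comp_pp} to these new minimizers (with datum $-g$) yields $(-z_\la) - \la \ge (-z_\mu) - \mu$, which rearranges to $z_\la - z_\mu \le \mu - \la$.

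Putting the two bounds together gives $|z_\la(x) - z_\mu(x)| \le |\la - \mu|$ pointwise, and passing to the supremum gives the stated estimate. The only step that needs a moment of thought is the sign-flip invariance, but I do not expect it to be a real obstacle: the identity above plus the symmetry of $A_\la$ immediately identify $-z_\la$ as the minimizer for data $-g$, after which the whole corollary is just a rewriting of Proposition \ref{chapTV:comp_pp} applied once to $g$ and once to $-g$.
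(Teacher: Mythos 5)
Your argument is correct: the paper states this corollary without proof as an immediate consequence of Proposition \ref{chapTV:comp_pp}, and your deduction is the natural one --- the proposition gives the one-sided bound $z_\la - z_\mu \ge -(\mu-\la)$ directly, and the symmetry $z\mapsto -z$, $g\mapsto -g$ of the dual functional over the sign-symmetric class $A_\la$ (together with uniqueness of the radial minimizer, which follows since $\tilde z$ is recovered from $\diverg z$ by $\tilde z(r)r^{N-1}=-\int_r^R s^{N-1}\diverg(z)(s)\,ds$) legitimately converts that into the missing upper bound. The only equally natural alternative would be to rerun the truncation argument of the proposition with the roles of $\vee$ and $\wedge$ exchanged to obtain $z_\la+\la\le z_\mu+\mu$ directly; your route avoids that recomputation.
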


Another very important consequence of Proposition \ref{chapTV:comp_pp} is that solutions ${(u_\la)}_{\la\in\R^+}$ form a semigroup. To prove this, let us denote
\begin{align*}
T_\la(g)=\argmin_{u\in\BV} \la\int_\O|Du|+\frac{1}{2}{\|u-g\|}_2^2
\end{align*}
for any real $\la\geq0$ and any datum $g\in L^2(\O)$. Thus,
\begin{align*}
T_\la={(I+\la\partial TV)}^{-1}	
\end{align*}
is the \textit{resolvent operator} discussed in \cite{Brezis73}.
\begin{prop}\label{chapTV:semi-grp}
If $\O=B(0,R)$, $g\in L^2(\O)$ is radial, $\mu>\la\geq0$ are two regularization parameters then
\begin{align*}
T_\mu(g)=T_{\mu-\la}\circ T_\la(g).
\end{align*}
\end{prop}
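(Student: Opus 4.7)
The strategy is to exhibit, for $\mu > \lambda \geq 0$, an explicit subgradient certifying that $u_\mu$ equals the ROF minimizer at parameter $\mu - \lambda$ with datum $u_\lambda$. By uniqueness of the minimizer of the strictly convex functional defining $T_{\mu-\lambda}$, it suffices to produce a vector field $\xi \in L^\infty(\Omega, \R^N)$ with $|\xi| \leq 1$, $\xi \cdot \nu = 0$ on $\partial \Omega$, satisfying both the Euler--Lagrange PDE $-(\mu - \lambda) \diverg \xi + u_\mu = u_\lambda$ and the Anzelotti contact condition $\xi \cdot Du_\mu = |Du_\mu|$.

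The natural candidate is $\xi := (z_\mu - z_\lambda)/(\mu-\lambda)$, where $z_\lambda, z_\mu$ are the radial dual minimizers of (\ref{chapTV:ROF_rad_dual}). Using the primal--dual identity $u_\nu = g + \diverg(z_\nu)$ for $\nu \in \{\lambda, \mu\}$, subtraction yields $u_\mu - u_\lambda = \diverg(z_\mu - z_\lambda) = (\mu-\lambda)\diverg \xi$, so the PDE is automatic. The pointwise bound $|\xi| \leq 1$ is precisely the corollary of Proposition \ref{chapTV:comp_pp}, while the Neumann condition $\tilde z_\mu(R) = \tilde z_\lambda(R) = 0$ transfers to $\xi$. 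So only the contact condition must be verified.

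For this, the radial structure is key. The Euler--Lagrange equation for $u_\mu$ forces $\tilde z_\mu(r) = \mu \sgn(D \tilde u_\mu)(r)$ on the support of $D\tilde u_\mu$; at jump radii one uses the continuity of $\tilde z_\mu$ afforded by the Sobolev embedding $W^{1,2}(0,R) \hookrightarrow C([0,R])$ noted earlier. At any such $r$, the bound $|\tilde z_\mu - \tilde z_\lambda| \leq \mu - \lambda$ combined with the admissibility constraint $|\tilde z_\lambda| \leq \lambda$ pins down $\tilde z_\lambda(r) = \lambda \sgn(D\tilde u_\mu)(r)$. Therefore $(\tilde z_\mu - \tilde z_\lambda)(r) = (\mu - \lambda)\, \sgn(D\tilde u_\mu)(r)$ on the support of $Du_\mu$, which is the desired contact identity.

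I expect the main obstacle to be the rigorous interpretation of the contact condition as an equality of Radon measures. One must split $Du_\mu$ into its absolutely continuous and singular (jump) parts and treat each separately: the absolutely continuous part reduces to a pointwise a.e.\ statement on $(0,R)$ in $r$, while the jump part, concentrated on spheres of constant radius, relies on the one-sided continuity of $\tilde z_\nu$ and the spherical structure of the radial jump set. Once both contributions are handled, the candidate $\xi$ satisfies every Euler--Lagrange condition of the resolvent $T_{\mu-\lambda}$ applied to $u_\lambda$, and uniqueness of its minimizer concludes $u_\mu = T_{\mu-\lambda}(u_\lambda)$.
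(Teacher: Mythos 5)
Your proposal is correct in substance, but it takes a genuinely different route from the paper's. The paper stays entirely in the dual formulation: it introduces the auxiliary minimizer $z_0$ of $\int_\Omega(\diverg z+T_\lambda(g))^2$ over $A_{\mu-\lambda}$, notes that the corollary of Proposition \ref{chapTV:comp_pp} makes $z_\mu-z_\lambda$ admissible for that intermediate problem while $z_0+z_\lambda$ is admissible for the $\mu$-problem, deduces $\diverg(z_0+z_\lambda)=\diverg z_\mu$ from a two-sided energy comparison, and then reads off the semigroup identity from the three relations $\diverg z_\nu=T_\nu(g)-g$. This completely sidesteps the contact condition, which is where all the work in your argument sits. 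You instead certify optimality in the primal with the explicit field $\xi=(z_\mu-z_\lambda)/(\mu-\lambda)$, and your squeeze argument --- $|\ti z_\mu|=\mu$ together with $|\ti z_\mu-\ti z_\lambda|\le\mu-\lambda$ and $|\ti z_\lambda|\le\lambda$ forces $\ti z_\lambda=\lambda\,\sgn(D\ti u_\mu)$ --- is a nice observation that does not appear in the paper. Both proofs hinge on exactly the same key input, namely the Lipschitz estimate $\|z_\mu-z_\lambda\|_\infty\le\mu-\lambda$ from the comparison principle. What the paper's version buys is that one never has to interpret $\xi\cdot Du_\mu=|Du_\mu|$ as an identity of measures; the delicate decomposition you flag at the end is simply not needed (and note that your splitting should in principle also account for a Cantor part, not only the absolutely continuous and jump parts --- although since $\ti\xi$ is continuous the Anzelotti pairing reduces to the product of a continuous function with the Radon measure $D\ti u_\mu$, so your $|D\ti u_\mu|$-a.e.\ sign identity handles all three parts at once). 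What your version buys is an explicit pointwise description of the dual certificate for the resolvent step from $u_\lambda$ to $u_\mu$.
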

\begin{proof}
\textit{Step 1:}
Let us set
\begin{align*}
z_0=\argmin_{z\in A_{\mu-\la}} \int_\O(\diverg(z)+T_\la(g))^2
\end{align*}
and ${z}_\mu'=z_0+z_\la$. We claim that $\diverg(z_\mu')=\diverg(z_\mu)$.\\ 
On the one hand, we know from the previous corollary that $z_\mu-z_\la\in A_{\mu-\la}$ thus by comparison with $z_0$
\begin{align*}
\int_\O(\diverg(z_\mu'-z_\la)+T_\la(g))^2\leq\int_\O(\diverg(z_\mu-z_\la)+T_\la(g))^2.
\end{align*}
and using the Euler-Lagrange equation it follows that
\begin{align*}
\int_\O(\diverg(z_\mu')+g)^2\leq\int_\O(\diverg (z_\mu)+g)^2.
\end{align*}
Whereas, on the other hand, $z_\mu'\in A_\mu$ thus by comparison with $z_\mu$
\begin{align*}
\int_\O(\diverg(z_\mu)+g)^2\leq\int_\O(\diverg (z_\mu')+g)^2
\end{align*}
which proves our claim.

\noindent\textit{Step 2:} Now considering the following Euler-Lagrange equations
\begin{align*}
\begin{cases}
\diverg(z_\mu)=T_\mu(g)-g,\\
\diverg(z_\la)=T_\la(g)-g,\\
\diverg(z_0)=T_{\mu-\la}\circ T_\la(g)-T_\la(g),
\end{cases}
\end{align*}
the conclusion follows readily from the result of Step 1.
\end{proof}

Let us recall that given an open set $\O\subset\R^N$ and an initial condition \linebreak $u(0)=g\in L^2(\O)$ there exists a unique solution  to the gradient flow equation (see \cite{Andreu}):
\begin{align}\label{chapTV:flot_TV}
-\partial_t u(t)\in\partial TV(u(t)) \text{ a.e. in }t\in[0,T].
\end{align}
We are also going to need the following classical result for the resolvent operator of a maximal monotone operator (see \cite[Corollary 4.4]{Brezis73}):
\newline
\begin{prop}\label{chapTV:Euler_sch}
Let $\O$ be an open set of $\R^N$ and $u(t)$ be the solution of (\ref{chapTV:flot_TV}) with an initial condition $u(0)=g\in L^2(\O)$ then
\begin{align*}
\lim_{n\to+\infty} {T_{\frac{t}{n}}}^{n}(u(0))=\lim_{n\to+\infty}{\left(I+\frac{t}{n}\partial TV\right)}^{-n}\big(u(0)\big)=u(t),
\end{align*}
the convergence taking place in $L^2(\O)$.
\end{prop}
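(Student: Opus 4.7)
The plan is to invoke the classical exponential (Crandall--Liggett) formula for the resolvent of a maximal monotone operator on a Hilbert space, applied here to $A=\partial TV$ on $H=L^2(\Omega)$. The setup is clean: $TV$ is convex, proper and lower semicontinuous on $L^2(\Omega)$ (the lower semicontinuity is Proposition \ref{chapTV:lsc}), so $\partial TV$ is a maximal monotone operator and, for every $\lambda>0$, the resolvent $T_\lambda=(I+\lambda\partial TV)^{-1}$ is a well-defined single-valued $1$-Lipschitz map on $L^2(\Omega)$.

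Next I would interpret the iterate $T_{t/n}^{n}(g)$ as the implicit Euler approximation of $u(t)$: setting $u_n^0=g$ and $u_n^{k+1}=T_{t/n}(u_n^k)$, the definition of the resolvent gives
\begin{align*}
\frac{u_n^{k+1}-u_n^k}{t/n}\in -\partial TV(u_n^{k+1}),
\end{align*}
so that $T_{t/n}^n(g)=u_n^n$ is exactly the value at time $t$ of the implicit Euler scheme with step size $t/n$. Denoting $\tilde u_n$ the associated piecewise-affine interpolant on $[0,t]$, the goal is to show that $\tilde u_n(t)\to u(t)$ in $L^2(\Omega)$.

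The route is through a priori estimates and compactness. For $g\in\dom(\partial TV)$, the $1$-Lipschitz property of $T_\lambda$ combined with monotonicity yields a uniform $L^2$ bound on the iterates, a uniform bound on the discrete velocities $\|u_n^{k+1}-u_n^k\|_{L^2}/(t/n)$ in terms of $\|(\partial TV)^\circ g\|_{L^2}$, and equicontinuity in $t$ of $\tilde u_n$. An Ascoli-type argument then extracts a subsequential limit $v\in C([0,T];L^2(\Omega))$; the demiclosedness of the graph of $\partial TV$ in $L^2$ allows one to pass to the limit in the discrete inclusion above and conclude $-v'(t)\in\partial TV(v(t))$ with $v(0)=g$. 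The uniqueness statement of \cite{Andreu} identifies $v$ with $u$, so the whole sequence converges. For arbitrary $g\in L^2(\Omega)$ a density argument together with the contractivity of $T_{t/n}^{n}$ (as a composition of $1$-Lipschitz maps) extends the result.

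The main obstacle is the quantitative part of the third step: one must show that the iterates of the resolvent do not drift as $n\to\infty$. The crucial estimate is of the Crandall--Liggett type, essentially $\|T_\lambda g-T_\mu g\|_{L^2}\le|\lambda-\mu|\,\|(\partial TV)^\circ g\|_{L^2}$, together with its iteration via the resolvent identity $T_\lambda=T_\mu\bigl(\tfrac{\mu}{\lambda}\,\cdot+(1-\tfrac{\mu}{\lambda})T_\lambda\cdot\bigr)$. These estimates form the technical core of the theory and are precisely what is packaged in \cite[Corollary 4.4]{Brezis73}; a self-contained proof here would essentially reproduce that machinery, which is why the author invokes the result as a black box.
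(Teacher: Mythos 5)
Your proposal is correct and matches the paper's treatment: the paper gives no proof of Proposition \ref{chapTV:Euler_sch} at all, simply invoking \cite[Corollary 4.4]{Brezis73}, exactly as you note at the end. Your sketch of the underlying Crandall--Liggett/Brezis machinery (implicit Euler interpretation of ${T_{t/n}}^{n}$, nonexpansiveness of the resolvent, a priori estimates for $g\in\dom(\partial TV)$, demiclosedness to identify the limit, and a density argument for general $g\in L^2(\O)$) is a faithful outline of the classical proof being cited.
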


With these results in hands we are ready to prove the main result of this section, namely
\begin{thm}\label{chapTV:flow_rad}
Let $\O=B(0,R)$, $g\in L^2(\O)$ be radial and $u(t)$ be the solution of (\ref{chapTV:flot_TV}) with an initial condition $u(0)=g$. Then $u(t)$ is the unique minimizer of
\begin{align}\label{chapTV:ROF_t}
\min_{u\in\BV} t\int_\O|Du|+\frac{1}{2}{\|u-g\|}_2^2.
\end{align}
\end{thm}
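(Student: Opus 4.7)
The plan is to show that $T_t(g)$ coincides with the $n$-fold iterate $T_{t/n}^n(g)$ for every $n\geq 1$, and then invoke the convergence statement of Proposition~\ref{chapTV:Euler_sch}. Uniqueness of the minimizer of (\ref{chapTV:ROF_t}) is free from the strict convexity of the ROF energy, so everything reduces to the identity $T_t(g)=u(t)$.

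First I would observe that radiality is preserved by the resolvent: if $v\in L^2(B(0,R))$ is radial, then so is $T_\lambda(v)$, by the rotational invariance of the ROF energy together with the uniqueness of its minimizer (exactly the argument given at the beginning of Section~\ref{chapTV:denoising_radial}). This is the only ``radial'' input needed, but it is essential because Proposition~\ref{chapTV:semi-grp} only applies when its input is radial.

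Next I would set up the iteration. Taking $\mu=t$ and $\lambda=t/n$ in Proposition~\ref{chapTV:semi-grp} gives
\begin{align*}
T_t(g)=T_{(n-1)t/n}\big(T_{t/n}(g)\big).
\end{align*}
Since $T_{t/n}(g)$ is radial by the previous paragraph, one may apply Proposition~\ref{chapTV:semi-grp} again with $\mu=(n-1)t/n$ and $\lambda=t/n$ to the radial function $T_{t/n}(g)$, obtaining
\begin{align*}
T_t(g)=T_{(n-2)t/n}\big(T_{t/n}^{2}(g)\big).
\end{align*}
Repeating this step a total of $n-1$ times (at each stage the inner argument is a radial function because it is the image of a radial function by $T_{t/n}$) yields
\begin{align*}
T_t(g)=T_{t/n}^{n}(g)\qquad\text{for every }n\geq 1.
\end{align*}

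Finally, Proposition~\ref{chapTV:Euler_sch} applied to the initial condition $u(0)=g$ gives
\begin{align*}
T_{t/n}^{n}(g)\xrightarrow[n\to+\infty]{L^2(\Omega)} u(t).
\end{align*}
The left-hand side of the identity above is independent of $n$, so passing to the limit yields $T_t(g)=u(t)$ in $L^2(\Omega)$, which is exactly the desired conclusion. I do not expect any real obstacle here: the only subtle point is the closure of the class of radial functions under $T_\lambda$, which is what allows the iterated application of the semi-group property; once that is noted, the proof is a one-line composition of Propositions~\ref{chapTV:semi-grp} and~\ref{chapTV:Euler_sch}.
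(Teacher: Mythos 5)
Your proof is correct and follows essentially the same route as the paper: iterate Proposition~\ref{chapTV:semi-grp} to get $T_t(g)=T_{t/n}^{\,n}(g)$ and then pass to the limit via Proposition~\ref{chapTV:Euler_sch}. Your version is in fact slightly more careful than the paper's, since you make explicit the preservation of radiality under $T_\lambda$ that justifies the iterated application of the semi-group identity.
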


\begin{proof}
From Proposition \ref{chapTV:semi-grp} it follows
\begin{align*}
{T_{\frac{t}{n}}}^{n}(u(0))=T_t(u(0))
\end{align*}
and making $n\to+\infty$ one has by Proposition \ref{chapTV:Euler_sch}
\begin{align*}
u(t)=T_t(u(0))
\end{align*}
hence the statement of the theorem.
\end{proof}

\begin{rmq}
This property is not true for general semi-groups $u(t)$. Let us reason by contradiction. If $u(t)$ solves both (\ref{chapTV:flot_TV}) and (\ref{chapTV:ROF_t}) then writing the Euler-Lagrange equation one has for some fixed $t>0$:
\begin{align*}
\begin{cases}
-\partial_t u(t)\in\partial TV(u(t)),\\ 
\frac{g-u(t)}{t}\in\partial TV(u(t)).
\end{cases}
\end{align*}
Placing ourselves on the set $\{\nabla u(t)\not=0\}$, it follows
\begin{align*}
-t\partial_t u = g-u,
\end{align*}
hence
\begin{align*}
\frac{d}{dt}\left(\frac{u}{t}\right)=-\frac{1}{t^2}(u-t\partial_tu)=-\frac{g}{t^2}=\frac{d}{dt}\left(\frac{g}{t}\right).
\end{align*}
Thus, there exists $C(x)$ that does not depend on $t$ such that
\begin{align*}
u(t,x)=g(x)+C(x)t
\end{align*}
whenever $\nabla u(t)\not=0$ in a neighborhood of $x$.
This contradicts the example of Allard where $\O=\R^2$ and $g=\chi_{[0,1]\times[0,-1]\cup[-1,0]\times[0,1]}$. Indeed let us consider the origin $x_0=0$. Then from the analysis of Allard (see section \ref{chapTV:stairc_evo}) one knows that for $0<t<t^*$ there is a neighborhood of $x_0$ that is contained in $\{\nabla u(t)\not=0\}$. Thus we get a contradiction if the minimizer $u_t$ of (\ref{chapTV:ROF}) with parameter $\la=t$ is not affine in $t$ at $x_0$. This is indeed observed by a simple numerical experimentation:
\begin{figure}[H]
\centering
   \begin{minipage}[c]{.47\linewidth}
     \includegraphics[width=\linewidth]{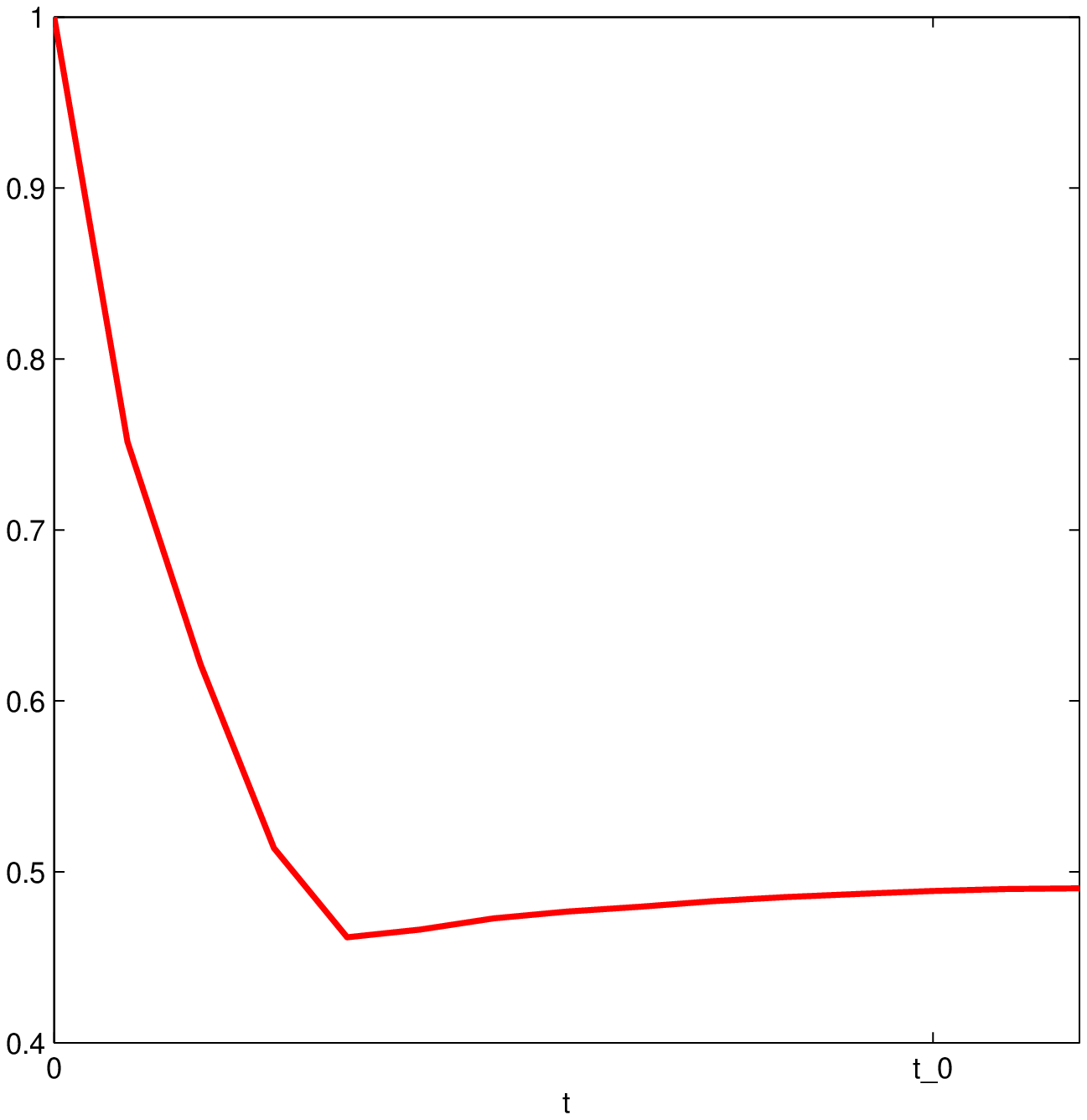}
\vspace{-0.4cm}
    \caption
{Evolution of $u(t,0)$}
\end{minipage}
   \begin{minipage}[c]{.46\linewidth}
     \includegraphics[width=\linewidth]{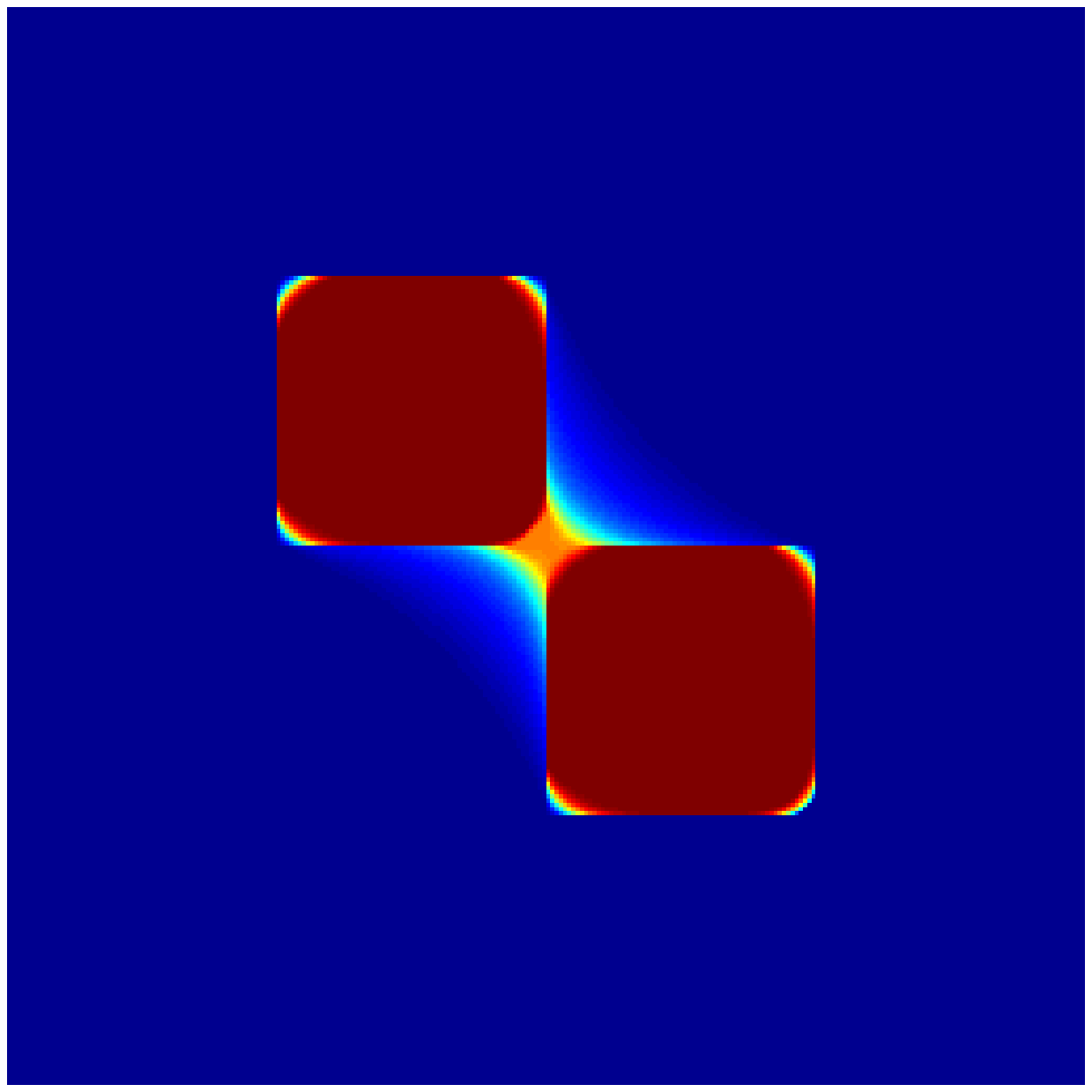}
    \caption
{Solution $u(t)$ for $t=t_0$}
\end{minipage}
\end{figure}
The approach is rigorous since the algorithm used for the minimization of discrete (\ref{chapTV:ROF}) functional is convergent (see \cite{ChamPD} and \cite[Chapter 5]{JalalzaiPhD}) and in \cite{Wang} it is shown that the difference between the continuous (\ref{chapTV:ROF}) model and its finite difference discretization is bounded and tends to zero.

\end{rmq}
\subsection{Staircasing and discontinuities}

From Proposition \ref{chapTV:comp_pp} we can also get some information on the staircase regions. Indeed, whenever condition $|z_\la|\leq\la$ is saturated, we know from the Euler-Lagrange equation that $\G u_\la\not=0$. This way, we can get an inclusion principle for the staircasing namely:
\newline
\begin{thm}
For any reals $\mu>\la\geq 0$, one has 
$\{|z_\la|<\la\}\subset\{|z_\mu|<\mu\}.$\\
\end{thm}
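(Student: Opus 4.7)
The plan is to invoke the corollary of Proposition~\ref{chapTV:comp_pp}, which already provides the pointwise contraction $|\tilde z_\mu(x) - \tilde z_\la(x)| \leq \mu - \la$ for every $x \in B(0,R)$. This is a genuine pointwise statement, not merely an essential-supremum bound, because $\tilde z_\la$ and $\tilde z_\mu$ are continuous on $[0,R]$ by the Sobolev embedding (as noted just after the definition of $A_\la$). Once this is in hand, the inclusion reduces to a single triangle-inequality step.

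Concretely, I would fix $x \in \{|z_\la| < \la\}$, so that $|\tilde z_\la(x)| < \la$ with \emph{strict} inequality, and simply write
\begin{align*}
|\tilde z_\mu(x)| \;\leq\; |\tilde z_\la(x)| + |\tilde z_\mu(x) - \tilde z_\la(x)| \;<\; \la + (\mu - \la) \;=\; \mu,
\end{align*}
placing $x$ in $\{|z_\mu| < \mu\}$. The strictness is preserved because the slack $\la - |\tilde z_\la(x)| > 0$ lives on the first summand, while the corollary is applied in its non-strict form on the second summand; no further case analysis is needed (in particular the two cases $\tilde z_\la(x) > 0$ and $\tilde z_\la(x) < 0$ are treated uniformly by the absolute value).

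There is essentially no genuine obstacle here: all the real work has already been done in proving Proposition~\ref{chapTV:comp_pp} and deducing the Lipschitz-in-$\la$ corollary from it (together with the symmetric counterpart obtained by applying the comparison to the datum $-g$, which yields $\tilde z^{(-g)}_\la = -\tilde z^{(g)}_\la$). The theorem itself is just a pointwise reading of the $L^\infty$-contraction of $\la \mapsto \tilde z_\la$. Conceptually, since the Euler--Lagrange condition $z_\la \cdot Du_\la = |Du_\la|$ with $|\tilde z_\la| \leq \la$ forces $\G u_\la = 0$ wherever the constraint is \emph{not} saturated, the inclusion $\{|z_\la| < \la\} \subset \{|z_\mu| < \mu\}$ is precisely the monotone growth of the staircase region of $u_\la$ in $\la$ for radial data, consistent with the flow interpretation established in Theorem~\ref{chapTV:flow_rad}.
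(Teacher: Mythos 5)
Your argument is correct and is exactly the deduction the paper intends: the theorem is stated as an immediate consequence of the comparison result (Proposition~\ref{chapTV:comp_pp}) and its corollary $\|z_\la-z_\mu\|_\infty\leq|\la-\mu|$, upgraded to a pointwise bound via the continuity of $\tilde z_\la$, followed by the triangle inequality. Your observation that the two-sided corollary (and not just the one-sided inequality $z_\la-\la\geq z_\mu-\mu$) is what is really needed, obtained by the symmetric argument for $-g$, is an accurate reading of the only nontrivial point.
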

This is not true in general as seen through the example of Allard (see Section \ref{chapTV:stairc_evo}).
\newline
\begin{rmq}Theorem \ref{chapTV:flow_rad} is a key step in \cite{Briani} to prove that in the one-dimensional case, staircasing occurs almost everywhere for a noisy 1D signal. By noisy we mean that the original signal was perturbed by a Wiener process. We cannot expect to get from our analysis such a result simply because a radial function that underwent an addition of noise is not radial anymore. We could however extend their result to the case of a radial ``noise'' but this does not seem really interesting. However, it would tell us again that staircasing is an important phenomenon for minimizers of perturbed signals.
\end{rmq}

As for the discontinuities one can actually refine the results of \cite{JalalzaiJump} thanks to the following
\begin{thm}
Let $\O=B(0,R)$ and $g\in L^N(\O)$ be radial. Consider also $\mu>\la>0$ and $u_\la$, $u_\mu$ the corresponding minimizers of (\ref{chapTV:ROF}). Then one has
\begin{align*}
J_{u_\mu}\subset J_{u_\la}.
\end{align*}
If in addition, $g\in L^N(\O)\cap\BV$ then given $\mu>\la\geq0$
\begin{align*}
J_{u_\mu}\subset J_{u_\la}\subset J_g.
\end{align*}
\end{thm}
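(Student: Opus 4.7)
The plan is to combine the semi-group identity from Proposition \ref{chapTV:semi-grp}, which is available precisely because $g$ is radial on a ball, with the jump-set inclusion proved in \cite{JalalzaiJump} stating that $J_{T_\la(h)}\subset J_h$ for any ROF datum $h$. The main observation is that the semi-group identity lets us view $u_\mu$ as a ROF minimizer whose datum is $u_\la$, and then the cited inclusion immediately gives the first assertion.

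More concretely, I would proceed as follows. First, since $g$ is radial, uniqueness of the minimizer and rotational invariance of the energy imply that $u_\la=T_\la(g)$ is radial, which allows us to apply Proposition \ref{chapTV:semi-grp} and write
\begin{align*}
u_\mu=T_\mu(g)=T_{\mu-\la}\circ T_\la(g)=T_{\mu-\la}(u_\la).
\end{align*}
Thus $u_\mu$ is itself the minimizer of a ROF problem on $B(0,R)$, with regularization parameter $\mu-\la>0$ and datum $u_\la$.

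Second, I would check that the hypotheses of the result of \cite{JalalzaiJump} are met by the auxiliary datum $u_\la$. Since $\O$ is bounded and the minimizer satisfies $\|u_\la\|_2\leq\|g\|_2$ (by comparing $u_\la$ with the admissible competitor $g$ in the energy), and since $g\in L^N(\O)$ with $\O$ bounded entails $u_\la\in L^N(\O)$ as well, the theorem of \cite{JalalzaiJump} applies and yields
\begin{align*}
J_{u_\mu}=J_{T_{\mu-\la}(u_\la)}\subset J_{u_\la},
\end{align*}
which is the first claimed inclusion.

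Third, for the refined statement under the additional assumption $g\in BV$, I would apply the same result once more, now with datum $g$ itself, to obtain $J_{u_\la}\subset J_g$ for every $\la>0$ (the case $\la=0$ being trivial since $u_0=g$). Chaining the two inclusions gives $J_{u_\mu}\subset J_{u_\la}\subset J_g$.

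I do not expect any serious obstacle: essentially the entire argument is driven by the semi-group property, and the only point that deserves attention is justifying that the intermediate datum $u_\la$ lies in the function class to which the result of \cite{JalalzaiJump} applies. This is the one step where one should be mindful of the radial hypothesis and the $L^N$ bound, since Proposition \ref{chapTV:semi-grp} is proved only in the radial setting (recall the Allard counterexample showing that the semi-group property fails for general data in dimension $\geq 2$).
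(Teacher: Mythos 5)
Your argument is correct, and it takes a somewhat different route from the paper's. The paper proves this theorem by first invoking Theorem \ref{chapTV:flow_rad} to identify $u_\la$ with the solution $u(\la)$ of the total variation flow, and then citing two external results: the monotonicity of jump sets along the flow (which gives $J_{u_\mu}\subset J_{u_\la}$) and the static inclusion $J_{u_\la}\subset J_g$ for $BV$ data. You instead bypass the flow altogether: the semi-group identity of Proposition \ref{chapTV:semi-grp} exhibits $u_\mu=T_{\mu-\la}(u_\la)$ as an ROF minimizer with datum $u_\la\in\BV$, so a single application of the static jump inclusion does the job. This is arguably cleaner, since you only need one cited result instead of two, and both routes ultimately rest on the same radial semi-group structure. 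Two small points deserve care. First, your justification that $u_\la\in L^N(\O)$ is too quick for $N\geq 3$: boundedness of $\O$ gives $L^N\subset L^2$ but not the converse, and the embedding $\BV\hookrightarrow L^{N/(N-1)}$ falls short of $L^N$ when $N\geq 3$; the correct argument is that the resolvent $T_\la=(I+\la\partial TV)^{-1}$ is a contraction in every $L^p$ (complete accretivity of $\partial TV$, see \cite{Andreu}), which yields $\|u_\la\|_N\leq\|g\|_N$. Relatedly, your claim $\|u_\la\|_2\leq\|g\|_2$ does not follow from comparing with the competitor $g$ (that comparison controls $\|u_\la-g\|_2$ via $TV(g)$, and requires $g\in\BV$); it follows from the same contraction property with the competitor $0$. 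Second, you should check that the version of the jump inclusion you cite applies to data in $\BV\cap L^N$ and not only $\BV\cap L^\infty$, since for merely $L^N$ data the intermediate function $u_\la$ need not be bounded; this is exactly the regularity class for which the extension in \cite{JalalzaiJump} is stated, so the step is fine, but it is the one hypothesis worth making explicit.
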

\begin{proof}
Once we know Theorem \ref{chapTV:flow_rad} the result is a straightforward consequence of \cite[Theorem 4.1]{ChamJump} and \cite[Theorem 2]{JalalzaiFlow} which establishes a similar inclusion principle for the $TV$ flow.
\end{proof}

\section{Conclusion and perspective}

In this paper, we examined some fine properties of the total variation minimization problem. In particular, we established that the staircasing phenomenon always occurs for the continuous ROF problem. We refined this result in the radial case by proving that the staircase zones are non-decreasing with the regularization parameter. The argument is based on the relation between the ROF problem and the total variation flow, which we prove to hold for radial data. In particular, using known results for the flow we also get that the discontinuities form a monotone sequence given that the datum is radial.\\

Most of the results our rely heavily on the connection with the perimeter problem via the coarea formula and it would interesting to understand how they can be adapted to take into account linear perturbations of the data (convolution but also Radon or Fourier transforms).\\

An interesting extension of our work would be to prove an $N$-dimensional counterpart for the result of the recent paper \cite{Briani}: staircasing occurs almost everywhere for noisy images.\\


\bibliography{biblio}{}
\bibliographystyle{siam}
\end{document}